\pgfplotsset{compat=1.18}
\newtheorem{theorem}{Theorem}
\newtheorem{lemma}{Lemma}
\newtheorem{corollary}{Corollary}
\theoremstyle{plain}
\theoremstyle{remark}
\theoremstyle{definition}
\newcommand{\dd}{\textup{d}}
\def\eps{\varepsilon}
\def\E{\mathbb{E}}
\def\P{\mathbb{P}}
\def\R{\mathbb{R}}
\begin{document}

% Use the \preprint command to place your local institutional report
% number in the upper righthand corner of the title page in preprint mode.
% Multiple \preprint commands are allowed.
% Use the 'preprintnumbers' class option to override journal defaults
% to display numbers if necessary
%\preprint{}

%Title of paper
\title{First passage times with fast immigration}

% repeat the \author .. \affiliation  etc. as needed
% \email, \thanks, \homepage, \altaffiliation all apply to the current
% author. Explanatory text should go in the []'s, actual e-mail
% address or url should go in the {}'s for \email and \homepage.
% Please use the appropriate macro foreach each type of information

% \affiliation command applies to all authors since the last
% \affiliation command. The \affiliation command should follow the
% other information
% \affiliation can be followed by \email, \homepage, \thanks as well.
\author{Hwai-Ray Tung\thanks{Department of Mathematics, University of Utah, Salt Lake City, UT 84112 USA (\texttt{ray.tung@utah.edu}).} \and Sean D. Lawley\thanks{Department of Mathematics, University of Utah, Salt Lake City, UT 84112 USA (\texttt{lawley@math.utah.edu}).}
}
\date{\today}
\maketitle

%\newpage
\begin{abstract}
Many scientific questions can be framed as asking for a first passage time (FPT), which generically describes the time it takes a random ``searcher'' to find a ``target.'' The important timescale in a variety of biophysical systems is the time it takes the fastest searcher(s) to find a target out of many searchers. Previous work on such fastest FPTs assumes that all searchers are initially present in the domain, which makes the problem amenable to extreme value theory. In this paper, we consider an alternative model in which searchers progressively enter the domain at a constant ``immigration'' rate. In the fast immigration rate limit, we determine the probability distribution and moments of the $k$-th fastest FPT. Our rigorous theory applies to many models of stochastic motion, including random walks on discrete networks and diffusion on continuous state spaces. Mathematically, our analysis involves studying the extrema of an infinite sequence of random variables which are both not independent and not identically distributed. Our results constitute a rare instance in which extreme value statistics can be determined exactly for strongly correlated random variables.
\end{abstract}

%%%%%%%%%%%%%%%%%%%%%%%%%%%%%%%%%%%%%%%%%%%%%%%%%%%%%%%%%%%%%%%%%%%%%%%%%%%%%%%%%%%%%%%%%%%%%%%%%%%%%%%%%%%%%%%%%%%%%%%%%%%%%%%%%%%%%%%%%%%%

\section{Introduction}

The timing of many processes in biology, chemistry, physics, and other fields can be understood in terms of first passage times (FPTs) \cite{redner2001}. Generically, a FPT is the time that it takes a random ``searcher'' to find a ``target.'' Depending on the application, the searcher and target could be, for example, a ligand and a receptor \cite{shoup82}, a sperm and an egg \cite{meerson2015}, or a predator and a prey \cite{kurella2015}. The FPT need not describe search in physical space, but could for instance be the time it takes a cell lineage to acquire a target number of mutations and thereby become cancerous \cite{armitage1954age, nordling1953new, durrett2015branching}.

The important timescale in many systems is the time it takes the fastest searcher to find a target out of many searchers, which is called the extreme FPT or fastest FPT \cite{lawley2024competition}. Extreme FPTs have been studied in the physics literature for decades \cite{weiss1983, yuste1996, yuste1997escape, yuste2000, yuste2001, redner2014} and have been more recently considered in biological contexts \cite{schuss2019, lawley2020dist, morgan2023modulation, bernoff2023single, maclaurin2025extreme}. A prototypical example is human fertilization, which is triggered by the fastest sperm cell to find the egg out of roughly $10^8$ sperm cells \cite{meerson2015}. Extreme FPTs have been modeled by assuming that there are initially $N\gg1$ searchers with independent and identically distributed (iid) FPTs $\tau_1,\dots,\tau_N$ (i.e.\ the times that it would take each individual searcher to find the target). The extreme FPT is then given by the minimum of $\tau_1,\dots,\tau_N$. Mathematically, this formulation is convenient because it allows the use of extreme value theory \cite{fisher1928, colesbook, haanbook, lawley2020dist}, which is a century-old theory for estimating the minimum of $N\gg1$ iid random variables. Importantly, extreme value theory generally requires the random variables to be iid, and indeed, very little is known in the non-iid case \cite{majumdar2020}.

In this paper, we suppose that there is initially only a single searcher, but that additional searchers enter the system (or ``immigrate'' or are ``born'') at a constant rate. We study the time it takes the fastest (or $k$-th fastest) searcher out of this growing population of searchers to find a target. This latter model is natural in applications in which the searchers are not all initially present, but rather enter the system progressively. For example, Campos and M\'endez \cite{campos2024} recently introduced this model to study foraging by ants and other eusocial species. Another example is the time for a person to be infected with a virus when seated near an infected person constantly exhaling viruses. From a mathematical standpoint, this problem formulation presents new challenges since extreme value theory no longer applies. Indeed, we show below that the fastest search time in this model is the minimum of an infinite sequence of random variables which are not independent and not identically distributed.

In the fast immigration rate limit, we obtain the full probability distribution and all the moments of the $k$-th fastest FPT based on the short-time behavior of the probability distribution of the FPT of a single searcher. Our results cover common models of stochastic search, including where each searcher is (i) a continuous-time Markov jump process on an arbitrary discrete network, or (ii) a diffusion process on a continuous state space in arbitrary dimension. We illustrate our results with numerical simulations in a few canonical search models.

The paper is organized as follows. Section~\ref{sec:model} formulates the model precisely. Section~\ref{sec:results} gives the limiting distributions for fast immigration and moments of the $k$-th fastest FPT. Section~\ref{sec:numerics} compares the analytical theory to numerical computations. Section~\ref{sec:discussion} discusses relations to prior work. An Appendix collects the mathematical proofs.

%%%%%%%%%%%%%%%%%%%%%%%%%%%%%%%%%%%%%%%%%%%%%%%%%%%%%%%%%%%%%%%%%%%%%%%%%%%%%%%%%%%%%%%%%%%%%%%%%%%%%%%%%%%%%%%%%%%%%%%%%%%%%%%%%%%%%%%%%%%%%%%%%%%%%%%%%%%%%%%%%%%%%%%%%%%%%%%%%%%%%%%%%%%%%%%%%%%%%%%%%%%%%%%%%%%%%%%%%%%%%%%%%%%%%%%%%%%%%%%%
\section{Model}
\label{sec:model}

We start with one searcher whose initial spatial position is sampled from a given probability distribution (which could be a Dirac delta mass at a single point). This initial searcher then randomly explores the spatial domain until it reaches some given target region(s). At rate $\lambda>0$, new searchers immigrate into the system and move randomly until hitting the target. The initial position and stochastic motion of each searcher are independent of other searchers and follow the same probability laws. We are interested in the time that it takes the fastest $k\ge1$ searchers to find the target, which we denote by $T_k$.

Mathematically, we ultimately need only specify the probability distribution of the time it takes a single searcher to find the target after it has entered the system. We let $\tau$ denote this random time and describe its probability distribution by its survival probability,
\begin{align*}
    S(t)
    =P(\tau>t).
\end{align*}
We emphasize that all of the information about the spatial search process (initial searcher position, spatial domain, searcher motion, target location(s), etc.)\ is encapsulated in the function $S(t)$.

The collection of search times is then given by
\begin{align}\label{eq:collection}
    \Big\{\tau_1,\tau_2+\frac{1}{\lambda}\sigma_1,\tau_3+\frac{1}{\lambda}(\sigma_1+\sigma_2),\dots\Big\}
    =\Big\{\tau_n+\frac{1}{\lambda}\sum_{i=1}^{n-1}\sigma_i\Big\}_{n\ge1},
\end{align}
where $\{\tau_n\}_{n\ge1}$ is a sequence of iid realizations of $\tau$ and $\{\sigma_i\}_{i\ge1}$ is a sequence of iid unit mean exponential random variables. To explain \eqref{eq:collection} in words, the search time for the initial searcher is $\tau_1$, the search time for the second searcher is the sum of its immigration time, $\sigma_1/\lambda$, and its search time after entering the system, $\tau_2$, the search time for the third searcher is the sum of its immigration time, $(\sigma_1+\sigma_2)/\lambda$, and its search time after entering the system, $\tau_3$, and so on. Notice that the search times $\tau_n+(1/\lambda)\sum_{i=1}^{n-1}\sigma_i$ are neither independent nor identically distributed. The fastest search time is then
\begin{align}\label{eq:T1}
    T_1
    =\min\Big\{\tau_n+\frac{1}{\lambda}\sum_{i=1}^{n-1}\sigma_i\Big\},
\end{align}
and the $k$-th fastest search time is
\begin{align*}
    T_k
    =\min\Big\{\big\{\tau_n+\frac{1}{\lambda}\sum_{i=1}^{n-1}\sigma_i\big\}\Big\backslash\cup_{j=1}^{k-1}T_j\Big\},\quad k\ge1.
\end{align*}

In the fast immigration limit ($\lambda\to\infty$), we determine the limiting distribution of $T_k$ as a function of the short-time behavior of a single searcher, $S(t)=P(\tau>t)$. We consider two classes of short-time behavior of $S(t)$, which encompass a wide variety of processes. The first class is when $1-S(t)=P(\tau\le t)$ (i.e.\ the cumulative distribution function of $\tau$) decays according to a power law at short time,
\begin{align}\label{eq:power}
    1-S(t) \sim At^p, \quad A>0,\, p>0,
\end{align}
where $f\sim g$ is shorthand for $\lim_{t\rightarrow 0^+}f/g =1$. The second class is when $1-S(t)$ decays exponentially at short time,
\begin{align}\label{eq:exponential}
    1-S(t) \sim At^pe^{-C/t}, \quad A>0,\, C>0,\,p\in\R.
\end{align}

An example of a process with the power law decay in \eqref{eq:power} is a searcher on a discrete graph \cite{lawley2020networks}. In this case, $p\ge1$ is the shortest number of jumps from the initial position of the searcher to the target. To find $A$, start by finding all paths from start to target of length of $p$. For each path, multiply the jump rates. Then add the results of each path and divide by $p!$ to get $A$. Another example of search with the power law decay in \eqref{eq:power} is a L{\'e}vy flight, which heuristically features paths that stay in a small neighborhood for a while before making a large jump to another neighborhood. L{\'e}vy flights have been used to describe animal foraging, modeling the idea that an animal lingers in an area to consume available resources, then makes a large movement in search of another resource heavy area \cite{palyulin2014, palyulin2016, tzou2023, gomez2024first, metzler2004}. For L{\'e}vy flights, the power in \eqref{eq:power} is $p=1$ and the prefactor $A$ is a more complicated but known expression \cite{lawley2023super}. Another example of search satisfying \eqref{eq:power} is diffusive search in which searchers can start arbitrarily close to the target, such as searchers who are initially uniformly distributed in the domain \cite{madrid2020comp}.

The exponential decay in \eqref{eq:exponential} describes searchers undergoing diffusion in a wide variety of environments with an initial searcher position that is bounded away from the target(s) \cite{lawley2020uni}, which is a canonical model of stochastic search \cite{redner2001, bressloffbook}. For the case of searchers undergoing $d$-dimensional Brownian motion, the parameter $C>0$ is the diffusion timescale,
\begin{align*}
    C=\frac{L^2}{4D}>0,
\end{align*}
where $L>0$ is the shortest distance from the searcher's initial position(s) to the target(s), and $D$ is the diffusivity of the searcher \cite{lawley2020uni}. The power $p\in\R$ and the prefactor $A>0$ encapsulate finer details of the search process \cite{lawley2020dist}.

%%%%%%%%%%%%%%%%%%%%%%%%%%%%%%%%%%%%%%%%%%%%%%%%%%%%%%%%%%%%%%%%%%%%%%%%%%%%%%%%%%%%%%%%%%%%%%%%%%%%%%%%%%%%%%%%%%%%%%%%%%%%%%%%%%%%%%%%%%%%%%%%%%%%%%%%%%%%%%%%%%%%%%%%%%%%%%%%%%%%%%%%%%%%%%%%%%%%%%%%%%%%%%%%%%%%%%%%%%%%%%%%%%%%%%%%%%%%%%%%%%%%%%%%%%%%%%%%%%%%%%%%%%%%%%%%%%%%%%%%%%%%%%%%%%%%%%%%%%%%%%%%%%%%%%%%%%%%%%%%%%%%%%%%%%%%%%%%%%%%%%%%%%%%%%%%%%%%%%%%
\section{Theoretical results}
\label{sec:results}
In section \ref{sec:SIequality}, we express the survival probability of the faster searcher in the immigration process,
\begin{align*}
    S_I(t)
    =P(T_1>t),
\end{align*}
in terms of the survival probability of a single searcher, $S(t)=P(\tau>t)$. Using this representation, we find in section \ref{sec:limitDist} the limiting distribution of $T_1$ for large $\lambda$. Section \ref{sec:moments} uses the limiting distribution to find the moments of $T_1$. Section \ref{sec:kth} generalizes the limiting distribution and moment results for the fastest FPT, $T_1$, to the $k$-th fastest  FPT, $T_k$.

%%%%%%%%%%%%%%%%%%%%%%%%%%%%%%%%%%%%%%%%%%%%%%%%%%%%%%%%%%%%
\subsection{The survival probability with immigration}
\label{sec:SIequality}
As noted in Campos and Mendez \cite{campos2024} it is possible to express $S_I(t)$ in terms of $S(t)$ and $\lambda$. Rather than their more analytical approach, we give a brief probabilistic derivation. 

Let $S_{I, n}(t)$ denote the survival probability when we condition on having exactly $n$ particles at time $t$. Since the number of particles that have entered the system by time $t$ follows a Poisson distribution with mean $\lambda t$ and we start with 1 particle at time $0$, by the law of total probability,
$$
S_I(t) = \sum_{j=1}^{\infty} \frac{e^{-\lambda t}(\lambda t)^{j-1}}{(j-1)!} S_{I, j}(t).
$$
Next, because the times that new particles immigrate in are generated by a homogeneous Poisson point process, if we condition on the number of immigrants, the arrival times of the immigrants are distributed independently and uniformly on $[0, t]$. Hence,
$$
S_{I, j}(t) = S(t)\left[\frac{1}{t}\int_0^t S(s) ds\right]^{j-1}.
$$
Plugging this back into our equation for $S_I(t)$, we find by the Taylor series of the exponential function that
\begin{equation}
    S_I(t) = S( t) \exp\left[-\lambda \int_0^t 1-S(s) ds \right].
    \label{eqn:SIequality}
\end{equation}

\subsection{Limiting distribution for large $\lambda$}
\label{sec:limitDist}
Classical extreme value theory results like Fisher-Tippett-Gnedenko tell us that extreme values of a large number of iid random variables converges to a Frechet, Weibull, or Gumbel distribution as the number of random variables grows \cite{haanbook, colesbook}. We cannot immediately apply such results since $T_1$ in \eqref{eq:T1} is the minimum of an infinite sequence of random variables which are both not independent and not identically distributed. Even so, we obtain analogous results below. We save their proofs for the Appendix.

\begin{theorem}\label{thm:distpower}
    Let $T_1$ be the FPT of the fastest searcher, where there is immigration at rate $\lambda$. Suppose a single searcher survival probability has the following power law decay,
    \begin{align*}
        1-S(t)
        \sim At^p\quad\text{as }t\to0+,
    \end{align*}
    where $A>0$ and $p>0$. Then we have the following convergence in distribution,
    \begin{align*}
    T_1/a_\lambda\to_\dd\textup{Weibull}(1, p+1),
    \end{align*}
    where
    \begin{align*}
        a_\lambda
        =(A\lambda/(p+1))^{-1/(p+1)}.
    \end{align*}
\end{theorem}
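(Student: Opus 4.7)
The plan is to leverage the closed form of the survival probability with immigration, namely
\begin{align*}
    S_I(t) = S(t)\exp\left[-\lambda \int_0^t (1-S(s))\,\dd s\right],
\end{align*}
derived in \eqref{eqn:SIequality}, and substitute $t = a_\lambda x$ for $x>0$ fixed, then send $\lambda\to\infty$. Since the Weibull$(1,p+1)$ distribution has survival function $e^{-x^{p+1}}$ on $x\ge 0$, it suffices to show
\begin{align*}
    \lim_{\lambda\to\infty} S_I(a_\lambda x) = e^{-x^{p+1}} \quad \text{for every }x>0,
\end{align*}
together with the trivial statement $P(T_1/a_\lambda > x)=1$ for $x\le 0$.

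First I would observe that $a_\lambda\to 0$ as $\lambda\to\infty$, so $S(a_\lambda x)\to S(0^+)=1$ by the power-law asymptotic $1-S(t)\sim At^p$ (which forces $S$ to be continuous at $0$). Thus the prefactor in $S_I(a_\lambda x)$ drops out, and the problem reduces to computing the limit of the exponent $\lambda\int_0^{a_\lambda x}(1-S(s))\,\dd s$. The key calculation is that if one could replace $1-S(s)$ by $A s^p$ on the whole interval of integration, one would get
\begin{align*}
    \lambda\int_0^{a_\lambda x} A s^p\,\dd s
    = \frac{\lambda A (a_\lambda x)^{p+1}}{p+1}
    = \frac{\lambda A x^{p+1}}{p+1}\cdot\frac{p+1}{A\lambda} = x^{p+1},
\end{align*}
by the definition of $a_\lambda$. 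So the target limit is exactly what the heuristic substitution predicts.

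To make this rigorous, I would use the $\varepsilon$-characterization of $\sim$: given $\varepsilon\in(0,1)$, choose $\delta>0$ such that $(1-\varepsilon)As^p \le 1-S(s) \le (1+\varepsilon)As^p$ for all $s\in(0,\delta]$. Since $a_\lambda x \to 0$, for $\lambda$ large enough we have $a_\lambda x \le \delta$, so integrating and multiplying by $\lambda$ yields
\begin{align*}
    (1-\varepsilon)x^{p+1} \le \lambda\int_0^{a_\lambda x}(1-S(s))\,\dd s \le (1+\varepsilon)x^{p+1}.
\end{align*}
Letting $\lambda\to\infty$ and then $\varepsilon\to 0$ gives the desired limit of the exponent. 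Combining with $S(a_\lambda x)\to 1$, continuity of $e^{-u}$, and continuity of the Weibull CDF, the convergence in distribution follows.

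The main obstacle is really only the careful handling of the integral asymptotic, which is standard but requires the uniform $\varepsilon$-sandwich above rather than a naive substitution, since the integrand $1-S(s)$ need not satisfy $1-S(s) = As^p$ outside a neighborhood of $0$. Everything else is plug-and-chug from the formula \eqref{eqn:SIequality}, which has already done the heavy lifting of encoding the joint structure of the non-iid sequence in \eqref{eq:collection} into a single scalar quantity.
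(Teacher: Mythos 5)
Your proposal is correct and follows essentially the same route as the paper: both substitute $t=a_\lambda x$ into the closed-form $S_I(t)=S(t)\exp[-\lambda\int_0^t(1-S(s))\,\dd s]$, sandwich $1-S(s)$ between $(1\mp\eps)As^p$ on a small interval $(0,\delta]$, and evaluate the resulting integral exactly to obtain the two-sided bound $(1\mp\eps)x^{p+1}$ before letting $\eps\to 0$. The only cosmetic difference is that you dispose of the prefactor $S(a_\lambda x)\to 1$ separately and then bound only the exponent, whereas the paper defines $S_\pm(t)=1-(1\mp\eps)At^p$ and sandwiches the full expression $S_\pm(t)\exp[-\lambda\int_0^t(1-S_\pm(s))\,\dd s]$ in one step.
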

Recall that if a random variable $X$ is $\textup{Weibull}(1, p+1)$, then
\begin{align*}
    P(X> x)
    =\exp(-x^{p+1}),\quad x\in\R.
\end{align*}

\begin{theorem}\label{thm:distexp}
    Let $T_1$ be the FPT of the fastest searcher, where there is immigration at rate $\lambda$. Suppose a single searcher survival probability has the following exponential decay at short time,
    \begin{align}\label{eq:assumptionexp}
        1-S(t)
        \sim At^pe^{-C/t}\quad\text{as }t\to0+,
    \end{align}
    where $C>0$, $A>0$, and $p\in\R$. Then we have the following convergence in distribution,
    \begin{align*}
    \frac{T_1-b_\lambda}{a_\lambda}\to_\dd\textup{Gumbel}(0, 1),
    \end{align*}
    where
    \begin{align*}
        a_\lambda
        =\frac{C}{(\ln (C\lambda))^2},\quad
        b_\lambda
        =\frac{C}{\ln (C\lambda)}
        +\frac{C(p+2)\ln(\ln(C\lambda))}{(\ln (C\lambda))^2}
        -\frac{C\ln(AC^p)}{(\ln (C\lambda))^2}.
    \end{align*}
\end{theorem}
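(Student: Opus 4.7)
The plan is to use the explicit representation $S_I(t) = S(t)\exp\!\bigl[-\lambda \int_0^t (1-S(s))\,ds\bigr]$ from \eqref{eqn:SIequality} to compute the limiting distribution function of $T_1$ directly. Since $T_1$ is a minimum and Theorem \ref{thm:distpower} uses the minimum-type Weibull, I interpret $\textup{Gumbel}(0,1)$ here as the minimum-type Gumbel with $P(X>x)=\exp(-e^x)$, so it suffices to prove, for each fixed $x\in\R$,
\begin{align*}
S_I(b_\lambda + a_\lambda x) \;\longrightarrow\; e^{-e^x} \quad\text{as } \lambda\to\infty.
\end{align*}
Writing $t_\lambda = b_\lambda + a_\lambda x$, I have $t_\lambda\to 0^+$, so the prefactor $S(t_\lambda)\to 1$ and the whole problem reduces to showing $\lambda\int_0^{t_\lambda}(1-S(s))\,ds \to e^x$.

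The second step splits into (i) a Laplace-type asymptotic for the integral of $As^p e^{-C/s}$ and (ii) a careful expansion under the specific choice of $b_\lambda, a_\lambda$. For (i), the change of variable $u=C/s$ converts the relevant integral to an incomplete-gamma tail,
\begin{align*}
\int_0^t s^p e^{-C/s}\,ds \;=\; C^{p+1}\!\int_{C/t}^\infty u^{-p-2} e^{-u}\,du \;\sim\; \frac{t^{p+2}}{C}\,e^{-C/t} \quad\text{as } t\to 0^+,
\end{align*}
where the final equivalence is the standard $\int_x^\infty u^{-q}e^{-u}\,du \sim x^{-q}e^{-x}$ as $x\to\infty$ (one integration by parts). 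To upgrade this to the integral of $1-S$, I use \eqref{eq:assumptionexp} to pick, for any $\varepsilon>0$, some $\delta>0$ on which $(1-S(s))/(As^p e^{-C/s})\in(1-\varepsilon,1+\varepsilon)$, and then bound the integral on $(0,t_\lambda)\subset(0,\delta)$ by sandwiching with $(1\pm\varepsilon)As^p e^{-C/s}$; this yields
\begin{align*}
\int_0^t (1-S(s))\,ds \;\sim\; \frac{At^{p+2}}{C}\,e^{-C/t} \quad\text{as } t\to 0^+.
\end{align*}

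For (ii), set $L=\ln(C\lambda)$, so that $a_\lambda=C/L^2$ and $b_\lambda=(C/L)\bigl[1+\delta_\lambda/L\bigr]$ with $\delta_\lambda=(p+2)\ln L-\ln(AC^p)$. Then $t_\lambda=(C/L)\bigl[1+(\delta_\lambda+x)/L\bigr]$, and the geometric-series expansion of $1/(1+\cdot)$ gives
\begin{align*}
\frac{C}{t_\lambda} \;=\; L-\delta_\lambda-x+o(1) \;=\; L-(p+2)\ln L+\ln(AC^p)-x+o(1),
\end{align*}
while $(p+2)\ln t_\lambda = (p+2)(\ln C-\ln L)+o(1)$. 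Taking logarithms of $\lambda(A/C)\,t_\lambda^{p+2}e^{-C/t_\lambda}$ and summing, the $L$'s cancel, the $\ln L$ terms cancel with coefficient $-(p+2)+(p+2)=0$, and the $O(1)$ constants $\ln A, \ln C$, $\ln(AC^p)$ collapse to zero, leaving $x+o(1)$. Combining with part (i) gives $\lambda\int_0^{t_\lambda}(1-S(s))\,ds\to e^x$, hence $S_I(t_\lambda)\to e^{-e^x}$.

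The main obstacle is the bookkeeping in (ii): the elaborate form of $b_\lambda$ is chosen precisely so that three orders of magnitude, namely $L$, $\ln L$, and $O(1)$, all cancel in $\ln\lambda+(p+2)\ln t_\lambda-C/t_\lambda+\text{const}$, and one must track the $O(\delta_\lambda/L)$ correction in the expansion of $C/t_\lambda$ while verifying that the next-order term $O(\delta_\lambda^2/L^2)=O((\ln L)^2/L^2)$ is genuinely $o(1)$. A minor secondary point is justifying in (i) that the asymptotic $1-S(s)\sim As^p e^{-C/s}$ may be substituted uniformly inside the integral; this is routine because the integrand is positive and $t_\lambda$ eventually lies in any prescribed neighborhood of $0$.
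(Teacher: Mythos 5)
Your proposal is correct and follows essentially the same route as the paper: the representation \eqref{eqn:SIequality}, a sandwich of $1-S$ by $(1\pm\eps)At^pe^{-C/t}$, the incomplete-gamma change of variable $u=C/s$ together with $\Gamma(r,z)\sim z^{r-1}e^{-z}$, and the cancellation of the $L$, $\ln L$, and $O(1)$ terms in $\ln\lambda+(p+2)\ln t_\lambda-C/t_\lambda$ under the prescribed $a_\lambda,b_\lambda$. The only difference is that you carry out explicitly the bookkeeping which the paper's Lemma~\ref{lem:exp} summarizes as ``a direct calculation shows,'' and you are right that the error term $O((\ln L)^2/L)$ from expanding $C/t_\lambda$ must be checked to be $o(1)$.
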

Recall that if a random variable $X$ is $\textup{Gumbel}(0, 1)$, then
\begin{align*}
    P(X> x)
    =\exp(-e^x),\quad x\in\R.
\end{align*}
Note that the choice of $a_\lambda, b_\lambda$ in \eqref{thm:distexp} is not unique. Indeed, it follows from elementary properties of convergence in distribution that $a_\lambda, b_\lambda$ in Theorem~\ref{thm:distexp} can be replaced by any $a_\lambda', b_\lambda'$ satisfying
\begin{align}\label{eq:elem}
    \lim_{\lambda\to\infty}\frac{a_{\lambda}'}{a_{\lambda}}
=1,\quad
\lim_{\lambda\to\infty}\frac{b_{\lambda}'-b_{\lambda}}{a_{\lambda}}
=0.
\end{align}
For more rapid convergence in some cases, 
\begin{corollary}\label{cor:lambert}
    The $a_\lambda , b_\lambda$ pair in Theorem \ref{thm:distexp} can be replaced with
    $$
    a_\lambda = \frac{C}{(p+2)^2W(W+1)}, \qquad b_\lambda = \frac{C}{(p+2)W},
    $$
    where
    \begin{align*}
        W
        =\begin{cases}
            W_0\Big[\frac{1}{p+2}\big(AC^{p+1}\lambda\big)^{\frac{1}{p+2}}\Big] & \text{if }p+2>0,\\
            W_{-1}\Big[\frac{1}{p+2}\big(AC^{p+1}\lambda\big)^{\frac{1}{p+2}}\Big]& \text{if }p+2<0,
        \end{cases}
    \end{align*}
    and $W_0(z)$ denotes the principal branch of the LambertW function and $W_{-1}(z)$ denotes the lower branch \cite{corless1996}.
    \label{coro:lamWparamOfABlam}
\end{corollary}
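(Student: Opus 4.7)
The plan is to invoke the elementary replacement principle \eqref{eq:elem} stated immediately after Theorem~\ref{thm:distexp}: it suffices to show that $a_\lambda' = C/((p+2)^2 W(W+1))$ and $b_\lambda' = C/((p+2)W)$ satisfy $a_\lambda'/a_\lambda\to 1$ and $(b_\lambda'-b_\lambda)/a_\lambda\to 0$, where $a_\lambda, b_\lambda$ are the expressions from Theorem~\ref{thm:distexp}. Throughout write $L = \ln(C\lambda)$, so that $a_\lambda = C/L^2$.

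The central idea is to bypass Lambert~W asymptotics and work directly with $V := (p+2)W$. Substituting $W = V/(p+2)$ into the defining identity $W e^W = z$, with $z = \frac{1}{p+2}(AC^{p+1}\lambda)^{1/(p+2)}$, and raising the result to the $(p+2)$-th power yields the single clean transcendental relation
\begin{equation*}
V^{p+2} e^V = A C^{p+1} \lambda .
\end{equation*}
In both regimes ($p+2>0$ with $W = W_0 > 0$, and $p+2 < 0$ with $W = W_{-1} < 0$) one has $V > 0$, so this identity is unambiguous. Taking logs gives the fixed-point relation
\begin{equation*}
V = L + \ln(AC^p) - (p+2)\ln V ,
\end{equation*}
which forces $V \to \infty$ and $V \sim L$ as $\lambda\to\infty$; feeding this back into the relation once more yields $V = L - (p+2)\ln L + \ln(AC^p) + O(\ln L / L)$.

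From here the verification of \eqref{eq:elem} is a short calculation. Writing $b_\lambda' = C/V = (C/L)\cdot 1/(1 + (V-L)/L)$ and expanding the geometric series through second order in the small quantity $(V-L)/L = O(\ln L / L)$ reproduces the three terms of $b_\lambda$ from Theorem~\ref{thm:distexp} with remainder of size $O((\ln L)^2 / L^3)$, which is $o(a_\lambda)$ since $a_\lambda = C/L^2$. For the scale factor, $(p+2)^2 W(W+1) = V(V+p+2) \sim L^2$, and hence $a_\lambda' / a_\lambda \to 1$.

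The main obstacle I anticipate is tracking the asymptotic expansion of $V$ to enough terms. The crude estimate $V \sim L$ is enough for $a_\lambda'/a_\lambda \to 1$, but the delicate part is $(b_\lambda'-b_\lambda)/a_\lambda \to 0$: one needs $V$ correct up to $o(1)$ \emph{and} the geometric expansion of $C/V$ pushed one further order so that the resulting error in $C/V$ is genuinely $o(1/L^2)$ rather than merely $O(1/L^2)$. Phrasing everything through $V$ has the added benefit that both branches of the Lambert~W function are handled uniformly, avoiding any separate case analysis for $W_0$ versus $W_{-1}$.
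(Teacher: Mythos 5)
Your argument is correct and follows the same skeleton the paper has in mind: verify the two conditions of \eqref{eq:elem} using the large-argument asymptotics of the Lambert~W function. Where you differ is in how you obtain those asymptotics. The paper simply cites the standard expansions $W_0(z)=\ln z-\ln\ln z+o(1)$ as $z\to\infty$ and $W_{-1}(z)=\ln(-z)-\ln(-\ln(-z))+o(1)$ as $z\to0^-$ from \cite{corless1996} and then applies whichever branch matches the sign of $p+2$. You instead substitute $V=(p+2)W$, so that $We^W=z$ collapses to the single branch-free identity $V^{p+2}e^V=AC^{p+1}\lambda$, take logarithms to get the fixed-point relation $V=L+\ln(AC^p)-(p+2)\ln V$ with $L=\ln(C\lambda)$, and iterate once to obtain $V=L-(p+2)\ln L+\ln(AC^p)+o(1)$. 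This is a self-contained re-derivation of the needed Lambert~W asymptotics that treats $W_0$ and $W_{-1}$ uniformly and sidesteps the $\ln(p+2)$ bookkeeping that otherwise appears; the small price is the short fixed-point analysis. From there, $V(V+p+2)\sim L^2$ gives $a_\lambda'/a_\lambda\to1$, and expanding $C/V$ to one order in $(V-L)/L$ gives $b_\lambda'-b_\lambda=o(1/L^2)=o(a_\lambda)$, which are exactly the requirements of \eqref{eq:elem}. One small remark: your worry about needing $V$ accurate to $O(\ln L/L)$ is unnecessary --- writing $V=L-(p+2)\ln L+\ln(AC^p)+r$ with $r=o(1)$, one finds $(b_\lambda'-b_\lambda)/a_\lambda=-r+O((\ln L)^2/L)\to0$, so the crude $o(1)$ remainder already closes the argument.
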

The proof of Corollary~\ref{cor:lambert} follows from \eqref{eq:elem} and the following asymptotics of the LambertW function \cite{corless1996},
\begin{align*}
    W_0(z)
    &=\ln z-\ln\ln z + o(1)\quad\text{as }z\to\infty,\\
    W_{-1}(z)
    &=\ln(-z)-\ln(-\ln(-z)) + o(1)\quad\text{as }z\to0-.
\end{align*}

The intuition behind these results is that even though the immigration times in \eqref{eq:collection} are dependent, when we condition on the total number of particles at time $t$, we can redefine the $j$-th particle as a randomly chosen particle rather than the $j$-th to immigrate. As noted in section~\ref{sec:SIequality}, this makes the immigration times iid and uniform on $[0, t]$. This would make $T_1$ a minimum of iid random variables. Furthermore, the number of particles is Poisson($\lambda t$) which for large $\lambda$ heuristically will have a number of particles near $\lambda t$. Therefore, by Fisher-Tippett-Gnedenko, we would expect $T_1$ to be roughly Frechet, Weibull, or Gumbel. 

\subsection{Moments for large $\lambda$}
\label{sec:moments}
Instead of the distribution of $T_1$, one may be interested in finding moments like $\E[T_1]$. It turns out we can find the moments using the limiting distributions in Theorems \ref{thm:distpower} and \ref{thm:distexp}. 

In the case where $1-S(t)\sim At^p$, we showed that $S_I$ converges to Weibull$(1, p+1)$, whose $m$-th moment is given in terms of the Gamma function,
$\Gamma(1+\frac{m}{p+1})$. As such, it is natural to conjecture that
\begin{theorem}
\label{thm:momentpower}
    When $1-S(t) \sim At^p$, the moments of the first passage time satisfy
    $$
    \lim_{\lambda\rightarrow \infty} \E\Bigg[\bigg(\bigg(\frac{A\lambda}{p+1}\bigg)^{1/(p+1)}T_1\bigg)^m\Bigg] = \Gamma\bigg(1+\frac{m}{p+1}\bigg).
    $$
\end{theorem}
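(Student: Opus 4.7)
The plan is to upgrade the distributional convergence of Theorem~\ref{thm:distpower} to convergence of $m$-th moments via uniform integrability. Set $Y_\lambda:=T_1/a_\lambda$, so Theorem~\ref{thm:distpower} gives $Y_\lambda\to_\dd W$ with $W\sim\mathrm{Weibull}(1,p+1)$. The substitution $u=x^{p+1}$ in $\E[W^m]=\int_0^\infty m x^{m-1}e^{-x^{p+1}}\,\dd x$ yields $\E[W^m]=\Gamma(1+m/(p+1))$, matching the target. Hence the theorem reduces to
\begin{equation*}
\sup_{\lambda\ge\lambda_0}\E\bigl[Y_\lambda^{m+1}\bigr]<\infty
\end{equation*}
for some $\lambda_0$, since a bounded $(m+1)$-moment provides uniform integrability of $\{Y_\lambda^m\}$ and, combined with $Y_\lambda\to_\dd W$, delivers $\E[Y_\lambda^m]\to\E[W^m]$.

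The analytical engine is the identity $S_I(t)=S(t)e^{-\lambda F(t)}\le e^{-\lambda F(t)}$ from~\eqref{eqn:SIequality}, where $F(t):=\int_0^t(1-S(s))\,\dd s$. I would extract two lower bounds on $F$: the assumption $1-S(t)\sim At^p$ gives, for any $\epsilon\in(0,A)$, some $\delta>0$ with $F(t)\ge\tfrac{A-\epsilon}{p+1}\,t^{p+1}$ on $[0,\delta]$; and since $1-S(t)>0$ for small $t>0$ and $S$ is non-increasing, $s_\infty:=\lim_{t\to\infty}S(t)<1$, so there exist $t_0\ge\delta$ and $\alpha>0$ with $F(t)\ge F(t_0)+\alpha(t-t_0)$ on $[t_0,\infty)$, while $F(t)\ge F(\delta)>0$ on $[\delta,t_0]$ by monotonicity.

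Writing $\E[Y_\lambda^{m+1}]=\int_0^\infty(m+1)y^m S_I(a_\lambda y)\,\dd y$ and splitting at $y=\delta/a_\lambda$ and $y=t_0/a_\lambda$, I would bound each piece. The key identity $\lambda a_\lambda^{p+1}=(p+1)/A$ turns the inner-region estimate into the $\lambda$-independent bound $S_I(a_\lambda y)\le\exp(-(1-\epsilon/A)\,y^{p+1})$, whose integral against $y^m$ is finite. The middle region contributes at most $(t_0/a_\lambda)^{m+1}e^{-\lambda F(\delta)}=O(\lambda^{(m+1)/(p+1)}e^{-\lambda F(\delta)})\to 0$. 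For the outer region, the substitution $u=\lambda\alpha a_\lambda y$ reduces the integral to an upper incomplete gamma and yields a bound $O(\lambda^{(m+1)/(p+1)-1}e^{-\lambda F(t_0)})$, which again vanishes as $\lambda\to\infty$.

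The main obstacle is the outer tail. Whenever $m+1>p+1$ one has $\lambda a_\lambda^{m+1}\to 0$, so no purely short-time bound on $S$ can produce a dominating function integrable against $y^m$ on $y\gg 1/a_\lambda$; exponential decay of $S_I$ in $y$ must be extracted, and this is only available because $F$ grows linearly at infinity, i.e.\ because the searcher reaches the target with positive probability. This is the one place where a global feature of $S$, rather than its short-time asymptotic, is essential. Combining the three pieces produces the uniform $(m+1)$-moment bound and completes the proof.
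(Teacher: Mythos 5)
Your argument is correct, but it reaches uniform integrability by a different route than the paper. You bound the $(m+1)$-th moment of $Y_\lambda=T_1/a_\lambda$ uniformly via a three-region split of $\int_0^\infty (m+1)y^m S_I(a_\lambda y)\,\dd y$: the short-time asymptotic controls $y\le\delta/a_\lambda$, monotonicity together with $F(\delta)>0$ controls the middle region, and the linear growth of $F(t)=\int_0^t(1-S(s))\,\dd s$ controls $y\ge t_0/a_\lambda$. The paper instead verifies the tail-expectation definition of uniform integrability for $\{(T_1/a_\lambda)^m\}$ directly (Lemma~\ref{lem:limitingproofint1}) and uses a single envelope: since $1-S$ is non-decreasing, $\int_0^{a_\lambda x}(1-S(s))\,\dd s\ge a_\lambda(x-1)\bigl(1-S(a_\lambda)\bigr)$ for $x\ge1$, hence $S_I(a_\lambda x)\le e^{-A_\lambda(x-1)}$ with $A_\lambda:=\lambda a_\lambda\bigl(1-S(a_\lambda)\bigr)\to p+1>0$; choosing $\lambda^*$ so that $A_\lambda$ is bounded below by a positive constant, the tail integral then vanishes uniformly as $K\to\infty$. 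This collapses your three regions into one because the comparison point is taken at the moving scale $a_\lambda$ rather than at a fixed $t_0$, so the slope $1-S(a_\lambda)$ comes directly from the short-time asymptotic. Your observation that the far tail forces a global property of $S$ is on point, but the property actually needed is simply monotonicity of the survival probability; your $S(\infty)<1$ is itself an automatic consequence of that monotonicity and the short-time hypothesis, not an extra assumption. Both routes are sound; the paper's is more compact, and yours makes the role of the global tail behavior more explicit.
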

Hence, if $1-S(t) \sim At^p$, then the mean of $T_1$ decays according to
\begin{align*}
    \E[T_1]
    =\Gamma\bigg(1+\frac{1}{p+1}\bigg)\bigg(\frac{p+1}{A\lambda}\bigg)^{1/(p+1)}
    +o(\lambda^{-1/(p+1)})\quad\text{as }\lambda\to\infty.
\end{align*}

When $1-S(t)\sim At^pe^{-C/t}$, we showed that $S_I$ converges to Gumbel(0, 1), whose $m$-th moment is given in terms of the derivative of the Gamma function, $\frac{d^m}{dt^m}\Gamma(1+t)|_{t=0}$. 
As such, it is natural to conjecture that
\begin{theorem}
\label{thm:momentexp}
    When $1-S(t) \sim At^pe^{-C/t}$, the moments of the first passage time satisfy
    $$
    \lim_{\lambda\rightarrow \infty} \E\left[\left(\frac{T_1-b_\lambda}{a_\lambda}\right)^m\right] = \frac{d^m}{dt^m}\Gamma\left(1+t\right)\bigg\rvert_{t=0}. 
    $$
    Of particular note, when $m=1$ the right hand side is $-\gamma$ and when $m=2$ the right hand side is $\gamma^2 + \pi^2/6$, where $\gamma\approx 0.5772$ is the Euler-Mascheroni constant. 
\end{theorem}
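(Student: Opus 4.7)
The plan is to strengthen the convergence in distribution given by Theorem~\ref{thm:distexp} to a convergence of moments. Setting $Z_\lambda := (T_1-b_\lambda)/a_\lambda$, Theorem~\ref{thm:distexp} yields $Z_\lambda\to_\dd Z\sim\textup{Gumbel}(0,1)$. Since the paper's Gumbel satisfies $\E[e^{tZ}]=\Gamma(1+t)$ (a direct substitution $u=e^x$ in the density $e^x\exp(-e^x)$) and hence $\E[Z^m]=\frac{d^m}{dt^m}\Gamma(1+t)|_{t=0}$, it is enough to show that the family $\{|Z_\lambda|^m\}_{\lambda\geq\lambda_0}$ is uniformly integrable; combined with convergence in distribution this will deliver $\E[Z_\lambda^m]\to\E[Z^m]$.

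The route to uniform integrability I propose is to produce $x$-integrable envelopes $U(x), L(x)$ with $P(Z_\lambda>x)\leq U(x)$ and $P(Z_\lambda<-x)\leq L(x)$ uniformly in $\lambda\geq\lambda_0$, and then apply dominated convergence to the identity
\begin{align*}
    \E[Z_\lambda^m] = m\int_0^\infty x^{m-1} P(Z_\lambda>x)\,dx + (-1)^m m\int_0^\infty x^{m-1} P(Z_\lambda<-x)\,dx.
\end{align*}
Representation~\eqref{eqn:SIequality} immediately yields an upper tail bound
\begin{align*}
    P(Z_\lambda>x) = S_I(a_\lambda x+b_\lambda) \leq \exp\Big[-\lambda\int_0^{a_\lambda x+b_\lambda}(1-S(s))\,ds\Big],
\end{align*}
and, using $1-S(t)e^{-v}\leq(1-S(t))+v$ for $v\geq 0$, a lower tail bound
\begin{align*}
    P(Z_\lambda<-x) = 1-S_I(b_\lambda-a_\lambda x) \leq (1-S(b_\lambda-a_\lambda x)) + \lambda\int_0^{b_\lambda-a_\lambda x}(1-S(s))\,ds.
\end{align*}

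The analytical content is a sharp asymptotic for $\int_0^t(1-S(s))\,ds$ under assumption~\eqref{eq:assumptionexp}. Substituting $u=C/s$ and applying the standard asymptotics of the incomplete Gamma function produces the Watson-type expansion $\int_0^t(1-S(s))\,ds\sim(A/C)t^{p+2}e^{-C/t}$ as $t\to 0+$. Combined with the identity $Ca_\lambda\sim b_\lambda^2$ (a direct check from the definitions) and the calibration $\lambda(A/C)b_\lambda^{p+2}e^{-C/b_\lambda}\to 1$ that the choice of $b_\lambda$ forces, this implies $\lambda\int_0^{a_\lambda x+b_\lambda}(1-S(s))\,ds\to e^x$ for each fixed $x\in\R$, so on any bounded range of $x$ one obtains the uniform bound $P(Z_\lambda>x)\leq\exp(-\tfrac12 e^x)$ for $\lambda$ large; a symmetric computation yields $P(Z_\lambda<-x)\leq 2e^{-x}$. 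Both envelopes are integrable against $x^{m-1}$ for every $m$.

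The main obstacle will be extending these bounds beyond the regime where the short-time asymptotic for $S$ is sharp, namely for $x$ of size comparable to or larger than $\ln(C\lambda)$. For the upper tail this is mild: once $a_\lambda x+b_\lambda$ exceeds a fixed $t_0>0$ there is $c_0>0$ with $\int_0^{t_0}(1-S(s))\,ds\geq c_0$, so the crude bound $P(Z_\lambda>x)\leq e^{-\lambda c_0}$ is negligible in $x$ and $\lambda$. For the lower tail, $P(Z_\lambda<-x)$ vanishes identically once $x\geq b_\lambda/a_\lambda$ since $T_1\geq 0$, which truncates the integral automatically. Piecing these regimes together produces the required global envelopes, after which dominated convergence gives the moment formula, with the $m=1,2$ special cases following from the standard identities $\Gamma'(1)=-\gamma$ and $\Gamma''(1)=\gamma^2+\pi^2/6$.
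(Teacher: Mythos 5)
Your overall strategy matches the paper's: upgrade the convergence in distribution from Theorem~\ref{thm:distexp} to convergence of moments via uniform integrability (equivalently, dominated convergence applied to the tail formula for $\E[Z_\lambda^m]$), and your tail formula, your MGF calculation for the paper's Gumbel, and your preliminary pointwise bounds on $S_I$ are all correct. However, there are two genuine gaps in the way you patch the ``large $x$'' regime, which is precisely where the work lies.

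First, the upper-tail patch does not produce an $x$-integrable envelope. Once $a_\lambda x+b_\lambda>t_0$, the bound $P(Z_\lambda>x)\le e^{-\lambda c_0}$ is \emph{constant in $x$}, so $\int_{x_\lambda}^{\infty} m x^{m-1} e^{-\lambda c_0}\,dx=\infty$ for every fixed $\lambda$; a constant cannot be ``negligible in $x$.'' What one actually needs (and what the paper does) is to keep an $x$-dependence in the exponent: by monotonicity of $S$,
\begin{align*}
\lambda\int_0^{a_\lambda x+b_\lambda}\bigl(1-S(s)\bigr)\,ds \;\ge\; \lambda\int_{a_\lambda+b_\lambda}^{a_\lambda x+b_\lambda}\bigl(1-S(s)\bigr)\,ds \;\ge\; \lambda a_\lambda (x-1)\bigl(1-S(a_\lambda+b_\lambda)\bigr),
\end{align*}
which grows linearly in $x$ with a slope $A_\lambda=\lambda a_\lambda(1-S(a_\lambda+b_\lambda))$ that is bounded below (it tends to $e$), giving an exponentially decaying envelope valid for all $x\ge K$ and all $\lambda$ past a threshold.

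Second, the lower-tail bound is not ``a symmetric computation.'' The truncation at $x=b_\lambda/a_\lambda$ helps, but that truncation point grows without bound with $\lambda$, so you still need an $x$-integrable bound that holds uniformly over $K\le x\le b_\lambda/a_\lambda$ and all $\lambda>\lambda^*$. Your own pointwise calibration $\lambda(A/C)(b_\lambda-a_\lambda x)^{p+2}e^{-C/(b_\lambda-a_\lambda x)}\to e^{-x}$ does \emph{not} deliver a bound of the form $(1+\eps)e^{-x}$ uniformly on this growing domain: pointwise convergence is not locally uniform plus compactness here, because the domain is not compact. This is the content of the paper's Lemma~\ref{lem:uniformconverge}, which reduces the claim to showing that the maximum over the admissible $X$ of
\begin{align*}
u+p^*\ln\!\left(\frac{u+p^*\ln u - X}{u^2}\right)-\frac{u^2}{u+p^*\ln u - X}+X
\end{align*}
tends to $0$ as $u\to\infty$. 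That optimization is the real technical hurdle; without it (or some substitute) your dominated-convergence step has a hole. Both gaps are repairable, but neither repair is implicit in what you wrote.
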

Hence, if $1-S(t) \sim At^pe^{-C/t}$, then Theorem~\ref{thm:distexp} implies that the mean of $T_1$ has the following three-term asymptotic expansion,
\begin{align*}
    \E[T_1]
    &=b_\lambda-\gamma a_\lambda
    +o(a_\lambda)\\
    &=C\Bigg[\frac{1}{\ln (C\lambda)}+\frac{(p+2)\ln(\ln(C\lambda))}{(\ln (C\lambda))^2}-\frac{(\gamma+\ln(AC^p))}{(\ln (C\lambda))^2}\Bigg]+o((\ln\lambda)^{-2})\;\text{as }\lambda\to\infty.
\end{align*}

The proofs of Theorems~\ref{thm:momentpower}-\ref{thm:momentexp} follow from showing uniform integrability of the collection of random variables, $\{((T_1-b_\lambda)/a_\lambda)^m\}_\lambda$, for any given $m$ and are saved for the Appendix.

\subsection{$k$-th fastest searcher}
\label{sec:kth}
We now generalize our results on the time the fastest searcher hits the target, $T_1$, to the time the $k$-th fastest searcher hits the target, $T_k$. Using \eqref{eqn:SIequality} and basic combinatorics, we show
\begin{lemma}
    $T_k$ has survival probability
    \begin{align*}
        &P(T_k > t)\\= &\exp\left[-\lambda \int_0^t 1-S(s)ds\right]\left[S(t)\frac{\left(\lambda \int_0^t 1-S(s)ds\right)^{k-1}}{(k-1)!} + \sum_{j=1}^{k-1} \frac{\left(\lambda \int_0^t 1-S(s)ds\right)^{j-1}}{(j-1)!} \right].
    \end{align*}
    \label{lem:Tksurvival}
\end{lemma}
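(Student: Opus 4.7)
The plan is to read $\{T_k > t\}$ as the event that at most $k-1$ of the particles present by time $t$ have reached the target by time $t$, split those particles into ``the initial one'' and ``the immigrants,'' and handle each separately.

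For the immigrants I would invoke Poisson thinning. Each immigrant arriving at some time $s \in [0,t]$ hits the target by time $t$ with conditional probability $1-S(t-s)$, independently of everything else, so the counts of immigrants that have and have not found the target are independent Poisson variables. In particular, the number $N_t$ of immigrants who have found the target by time $t$ is Poisson with mean
\begin{align*}
    \Lambda(t) := \int_0^t \lambda(1-S(t-s))\,\dd s = \lambda \int_0^t (1-S(s))\,\dd s,
\end{align*}
and is independent of the indicator $X \in \{0,1\}$ that the initial searcher has found the target (with $\P(X=0)=S(t)$). Since $\{T_k > t\} = \{X + N_t \le k-1\}$, conditioning on $X$ yields
\begin{align*}
    \P(T_k > t) = S(t)\,\P(N_t \le k-1) + (1-S(t))\,\P(N_t \le k-2).
\end{align*}

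The remaining step is bookkeeping: I would write out each Poisson tail, pull out $e^{-\Lambda(t)}$, and note that for $j = 0,\dots,k-2$ the coefficient of $\Lambda(t)^j/j!$ is $S(t)+(1-S(t))=1$, while the term $j = k-1$ survives only from the first sum, with coefficient $S(t)$. Reindexing $j \mapsto j-1$ in the surviving lower-order sum produces exactly the expression stated in Lemma~\ref{lem:Tksurvival}.

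As an alternative route that more literally uses \eqref{eqn:SIequality}, one can condition on the total number of particles $j$ present at time $t$ exactly as in section~\ref{sec:SIequality}: given $j$ particles, the probability of at most $k-1$ target hits is a mixture of two binomial tails, coming from whether or not the initial searcher has hit. After weighting by the Poisson masses $e^{-\lambda t}(\lambda t)^{j-1}/(j-1)!$ and swapping the order of summation, the identity $(1-q(t))\lambda t = \Lambda(t)$ with $q(t)=\tfrac{1}{t}\int_0^t S(s)\,\dd s$ collapses the inner sum to a Poisson tail in $\Lambda(t)$, recovering the same formula. I do not anticipate a genuinely hard step here; the only care needed is keeping the initial searcher bookkept separately from the immigrants, since only the immigrants feel the Poisson arrival mechanism.
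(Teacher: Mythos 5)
Your main argument is correct and takes a genuinely different route from the paper. The paper proves the lemma by first writing $P(T_k>t)$ as a telescoping sum $P(T_1>t)+\sum_{j=1}^{k-1}P(T_{j+1}>t>T_j)$, then computing each $P(T_{j+1}>t>T_j)$ by conditioning on the total number $N(t)=n$ of particles present at time $t$, exploiting the fact that given $n$ the immigration times are iid uniform on $[0,t]$; this produces a pair of binomial probabilities (distinguishing whether the initial searcher is among the $j$ hits) which are then summed against the Poisson masses of $N(t)$. Your primary route skips both the telescoping decomposition and the conditioning on $N(t)$: by applying Poisson thinning directly to the immigrant arrival stream, with location-dependent retention probability $1-S(t-s)$, you get in one step that the count $N_t$ of immigrants who have hit by time $t$ is Poisson with mean $\Lambda(t)=\lambda\int_0^t(1-S(s))\,\dd s$, and that $N_t$ is independent of the initial searcher's indicator $X$. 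Then $P(T_k>t)=P(X+N_t\le k-1)=S(t)P(N_t\le k-1)+(1-S(t))P(N_t\le k-2)$, and the algebraic collapse you sketch (coefficient $S(t)+(1-S(t))=1$ for $j\le k-2$, only the $j=k-1$ term retains $S(t)$) reproduces the lemma exactly. Your thinning argument is cleaner and avoids the double sum over $j$ and $n$; the paper's approach, on the other hand, produces as a byproduct the explicit formula \eqref{eq:lemmauseful} for $P(T_{j+1}>t>T_j)$, which it then reuses directly in the proofs of Theorems~\ref{thm:kthdistpower} and~\ref{thm:kthdistexp}. (Your secondary ``alternative route'' is essentially the paper's proof.)
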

Using this lemma, we find limiting distributions for $T_k$.
\begin{theorem}
    When $1-S(t)\sim At^p$, then using the $a_\lambda$ from Theorem \ref{thm:distpower},
    \begin{align*}
        \frac{T_k}{a_\lambda}
        \to_\dd Y_k\quad\text{as }\lambda\to\infty,
    \end{align*}
    where the random variable $Y_k\ge0$ has the survival probability 
    \begin{align*}
        \P(Y_k>x)
        =\frac{\Gamma(k,x^{p+1})}{(k-1)!}\quad x\ge0,
    \end{align*}
    where $\Gamma(r,z)$ denotes the upper incomplete gamma function, $\Gamma(r,z)=\int_z^\infty u^{r-1}e^{-u}\,\dd u$.
    \label{thm:kthdistpower}
\end{theorem}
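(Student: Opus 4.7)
The plan is to apply Lemma~\ref{lem:Tksurvival} to the rescaled variable $T_k/a_\lambda$, take the pointwise limit of the resulting survival probability as $\lambda\to\infty$, and recognize the result as $\Gamma(k,x^{p+1})/(k-1)!$. Concretely, I substitute $t=a_\lambda x$ into the formula from Lemma~\ref{lem:Tksurvival}, giving
\begin{align*}
    P(T_k/a_\lambda>x)
    =e^{-\lambda I_\lambda(x)}\bigg[S(a_\lambda x)\frac{(\lambda I_\lambda(x))^{k-1}}{(k-1)!}+\sum_{j=1}^{k-1}\frac{(\lambda I_\lambda(x))^{j-1}}{(j-1)!}\bigg],
\end{align*}
where $I_\lambda(x)=\int_0^{a_\lambda x}(1-S(s))\,\dd s$. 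The proof then reduces to two asymptotic facts: (i) $S(a_\lambda x)\to 1$ since $a_\lambda\to 0$ and $S$ is continuous at zero with $S(0)=1$, and (ii) $\lambda I_\lambda(x)\to x^{p+1}$ for each fixed $x\ge0$.

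For (ii), the key step is to change variables $s=a_\lambda u$, which gives $\lambda I_\lambda(x)=\lambda a_\lambda\int_0^x (1-S(a_\lambda u))\,\dd u$. The power-law hypothesis $1-S(s)\sim As^p$ implies that for any $x>0$ and any $\eps>0$, we have $(1-\eps)A(a_\lambda u)^p\le 1-S(a_\lambda u)\le (1+\eps)A(a_\lambda u)^p$ uniformly in $u\in[0,x]$ once $\lambda$ is large enough (because $a_\lambda x\to 0$). Integrating and using the identity $\lambda a_\lambda^{p+1}A/(p+1)=1$ (by the choice of $a_\lambda$), both bounds tend to $x^{p+1}$, so $\lambda I_\lambda(x)\to x^{p+1}$. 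This is the only place where the specific form of $a_\lambda$ enters, and it is also the step most likely to require care, since one must handle $1-S(s)$ being merely asymptotic to $As^p$ rather than equal to it; bounding the integrand uniformly on shrinking intervals is the natural way around this.

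Plugging the two limits into the displayed expression above yields
\begin{align*}
    \lim_{\lambda\to\infty}P(T_k/a_\lambda>x)
    =e^{-x^{p+1}}\sum_{m=0}^{k-1}\frac{(x^{p+1})^m}{m!}.
\end{align*}
The final step is to invoke the standard identity for positive integer $k$,
\begin{align*}
    \Gamma(k,z)=(k-1)!\,e^{-z}\sum_{m=0}^{k-1}\frac{z^m}{m!},
\end{align*}
which is verified by $k-1$ applications of integration by parts in $\Gamma(k,z)=\int_z^\infty u^{k-1}e^{-u}\,\dd u$. Setting $z=x^{p+1}$ identifies the limit with $\Gamma(k,x^{p+1})/(k-1)!$, proving convergence in distribution of $T_k/a_\lambda$ to $Y_k$. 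Continuity of the limiting survival function on $[0,\infty)$ means pointwise convergence of survival functions suffices, so no additional argument is needed to upgrade to convergence in distribution.
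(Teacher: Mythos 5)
Your proof is correct and follows essentially the same route as the paper: substitute $t=a_\lambda x$ into the survival formula from Lemma~\ref{lem:Tksurvival}, show $S(a_\lambda x)\to 1$ and $\lambda\int_0^{a_\lambda x}(1-S(s))\,\dd s\to x^{p+1}$, and recognize the resulting finite exponential sum as $\Gamma(k,x^{p+1})/(k-1)!$. Your inline squeeze argument for (ii) reproduces the content of the paper's Lemma~\ref{lem:power}; the paper simply invokes that lemma rather than re-deriving the bound, and otherwise the structure is identical.
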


\begin{theorem}
When $1-S(t)\sim At^pe^{-C/t}$, then using the $a_\lambda$ and $b_\lambda$ from Theorem \ref{thm:distexp} (or Corollary~\ref{cor:lambert}),
    \begin{align*}
        \frac{T_k-b_\lambda}{a_\lambda}
        \to_\dd Z_k\quad\text{as }\lambda\to\infty,
    \end{align*}
    where the random variable $Z_k$ has the following probability density function,
    \begin{align*}
        f_{Z_k}(x)
        =\frac{\exp(kx-e^x)}{(k-1)!},\quad x\in\R.
    \end{align*}
    \label{thm:kthdistexp}
\end{theorem}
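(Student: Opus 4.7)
The plan is to start from the explicit representation of $P(T_k>t)$ provided by Lemma~\ref{lem:Tksurvival}, specialize to $t=t_\lambda:=a_\lambda x+b_\lambda$ with $a_\lambda,b_\lambda$ as in Theorem~\ref{thm:distexp}, and let $\lambda\to\infty$. Writing $\Lambda(t):=\lambda\int_0^t(1-S(s))\,\dd s$, Lemma~\ref{lem:Tksurvival} reads
\begin{align*}
P(T_k>t_\lambda)=e^{-\Lambda(t_\lambda)}\Bigg[S(t_\lambda)\frac{\Lambda(t_\lambda)^{k-1}}{(k-1)!}+\sum_{j=1}^{k-1}\frac{\Lambda(t_\lambda)^{j-1}}{(j-1)!}\Bigg].
\end{align*}
The entire convergence thus reduces to two input facts: $S(t_\lambda)\to1$ and $\Lambda(t_\lambda)\to e^x$. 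The first is immediate, since the asymptotics of $a_\lambda,b_\lambda$ in Theorem~\ref{thm:distexp} give $t_\lambda\to0$, and the hypothesis $1-S(t)\sim At^p e^{-C/t}$ as $t\to0+$ yields $S(0+)=1$.

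The second input is where Theorem~\ref{thm:distexp} does the heavy lifting. The identity \eqref{eqn:SIequality} gives $P(T_1>t_\lambda)=S(t_\lambda)e^{-\Lambda(t_\lambda)}$, and the convergence $(T_1-b_\lambda)/a_\lambda\to_\dd\textup{Gumbel}(0,1)$ says $P(T_1>t_\lambda)\to e^{-e^x}$; combined with $S(t_\lambda)\to1$, this forces $\Lambda(t_\lambda)\to e^x$. Feeding these two limits into the displayed formula and reindexing the sum so its terms align with the stand-alone term yields
\begin{align*}
P(T_k>t_\lambda)\to e^{-e^x}\sum_{j=0}^{k-1}\frac{e^{jx}}{j!}.
\end{align*}

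To finish, I would verify this matches $P(Z_k>x)=\int_x^\infty f_{Z_k}(y)\,\dd y$. The substitution $u=e^y$ converts the integral into $\Gamma(k,e^x)/(k-1)!$, and the finite-sum expansion $\Gamma(k,z)=(k-1)!\,e^{-z}\sum_{j=0}^{k-1}z^j/j!$ (valid for integer $k\ge1$) produces precisely the limit above. Continuity of $x\mapsto P(Z_k>x)$ on $\R$ then upgrades pointwise convergence to convergence in distribution. I do not anticipate a serious obstacle: all of the analytical difficulty has already been absorbed into Theorem~\ref{thm:distexp}, and the only subtle point is recognizing that extracting $\Lambda(t_\lambda)\to e^x$ requires separately controlling both the product $S(t_\lambda)e^{-\Lambda(t_\lambda)}$ (supplied by Theorem~\ref{thm:distexp}) and the factor $S(t_\lambda)$ (trivial from $t_\lambda\to0$).
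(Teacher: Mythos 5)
Your proof is correct and follows essentially the same route as the paper: plug $t=a_\lambda x+b_\lambda$ into the survival formula of Lemma~\ref{lem:Tksurvival}, use $S(t_\lambda)\to1$ and $\Lambda(t_\lambda):=\lambda\int_0^{t_\lambda}(1-S(s))\,\dd s\to e^x$ to obtain the limit $e^{-e^x}\sum_{j=0}^{k-1}e^{jx}/j!$, and identify this with the survival function of $Z_k$. The one cosmetic difference is that the paper sources the limit $\Lambda(t_\lambda)\to e^x$ directly from Lemma~\ref{lem:exp}, whereas you back it out of the already-proved Theorem~\ref{thm:distexp} together with $S(t_\lambda)\to1$; both are valid, and the paper also applies the limit term-by-term in the decomposition \eqref{eq:convenient} rather than to the combined formula, which again is equivalent.
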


We can once again show that the moments of $(T_k-b_\lambda)/a_\lambda$ converge to the moments of the limiting distribution, i.e.,

\begin{theorem}
    When $1-S(t)\sim At^p$ and $a_\lambda$ is from Theorem \ref{thm:distpower}, then for any positive integer $m$,
    \begin{align*}
        \lim_{\lambda \rightarrow \infty}\E\left[\left(\frac{T_k}{a_\lambda}\right)^m\right]
        = \E[Y_k^m] = \frac{1}{(k-1)!}\Gamma\left(k+\frac{m}{p+1}\right).
    \end{align*}
    Similarly, when $1-S(t)\sim At^pe^{-C/t}$ and $a_\lambda$ and $b_\lambda$ are from Theorem \ref{thm:distexp} (or Corollary~\ref{cor:lambert}), then for any integer $m\ge1$,
    \begin{align*}
        \lim_{\lambda \rightarrow \infty}\E\left[\left(\frac{T_k-b_\lambda}{a_\lambda}\right)^m\right]
        = \E[Z_k^m] = \frac{1}{(k-1)!}\frac{d^m}{dt^m}\Gamma\left(k+t\right)\bigg\rvert_{t=0}.
    \end{align*}
    \label{thm:kthmoment}
\end{theorem}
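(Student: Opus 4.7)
The plan is to adapt the two-step strategy underlying Theorems \ref{thm:momentpower}--\ref{thm:momentexp}: convergence in distribution, which is already in hand from Theorems \ref{thm:kthdistpower}--\ref{thm:kthdistexp}, combined with uniform integrability of the normalized random variables, yields convergence of moments. Since both $Y_k$ and $Z_k$ possess finite moments of all orders, it suffices to establish $\sup_\lambda \E[(T_k/a_\lambda)^{m+1}]<\infty$ in the power-law case and $\sup_\lambda \E[|(T_k-b_\lambda)/a_\lambda|^{m+1}]<\infty$ in the exponential case.

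The main tool is the tail bound obtained from Lemma \ref{lem:Tksurvival} by replacing $S(t)$ with $1$:
$$
P(T_k > t)\le e^{-\phi(t)}\sum_{j=0}^{k-1}\frac{\phi(t)^j}{j!},\qquad \phi(t):=\lambda\int_0^t (1-S(s))\,\dd s,
$$
which is just $P(\mathrm{Poisson}(\phi(t))\le k-1)$. (Probabilistically, the number of immigrated searchers that have found the target by time $t$ is $\mathrm{Poisson}(\phi(t))$ by thinning.) In the power-law case I would use $\E[(T_k/a_\lambda)^{m+1}] = (m+1)\int_0^\infty x^m\,P(T_k>a_\lambda x)\,\dd x$ and split the integral at $x=t_0/a_\lambda$ for a small fixed $t_0>0$ on which $1-S(s)\ge \tfrac12 A s^p$. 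For $x\le t_0/a_\lambda$ this gives $\phi(a_\lambda x)\ge \tfrac12 x^{p+1}$, so the integrand is uniformly dominated by a fixed integrable function of $x$. For $x>t_0/a_\lambda$, the monotonicity of $1-S$ yields the linear lower bound $\phi(a_\lambda x)\ge \lambda(1-S(t_0))(a_\lambda x - t_0)$, and the resulting tail contribution vanishes as $\lambda\to\infty$.

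For the exponential case, $(T_k-b_\lambda)/a_\lambda$ can be negative, so I would control both tails separately. The right-tail integral $\int_0^\infty x^m P(T_k>b_\lambda+a_\lambda x)\,\dd x$ is bounded by the same Poisson tail combined with a uniform lower estimate $\phi(b_\lambda+a_\lambda x)\gtrsim e^x$ valid for $\lambda$ large; this estimate is a direct counterpart of the asymptotic already computed when proving Theorem \ref{thm:distexp}, and it produces a dominating function proportional to a polynomial in $x$ times $\exp(-c e^x)$. The left-tail integral $\int_0^{b_\lambda/a_\lambda} y^m P(T_k\le b_\lambda - a_\lambda y)\,\dd y$ is handled via the complementary bound $P(T_k\le t)\le P(\mathrm{Poisson}(\phi(t))\ge k)\le \phi(t)^k/k!$, combined with the fact that $\phi(b_\lambda - a_\lambda y)$ decays doubly exponentially in $y$, so this contribution is negligible.

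The principal obstacle is producing the lower bound $\phi(b_\lambda+a_\lambda x)\gtrsim e^x$ uniformly in $x$ over a range that grows with $\lambda$. This is essentially the same asymptotic expansion of $\phi$ near $t=b_\lambda$ used to prove Theorem \ref{thm:distexp}, but it needs to be upgraded from a pointwise limit to a locally uniform estimate. Once that step is secured, the remainder of the argument is a routine uniform integrability bookkeeping that mirrors the proofs of Theorems \ref{thm:momentpower}--\ref{thm:momentexp}.
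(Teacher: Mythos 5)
Your high-level strategy — convergence in distribution (already in hand from Theorems~\ref{thm:kthdistpower}--\ref{thm:kthdistexp}) plus uniform integrability — matches the paper's, and the key tail bound $P(T_k>t)\le e^{-\phi}\sum_{j=0}^{k-1}\phi^j/j!$ (with $\phi=\lambda\int_0^t(1-S)$) is exactly the one the paper extracts from Lemma~\ref{lem:Tksurvival}. However, two of the specific steps do not go through as written.

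\textbf{Left tail.} Your bound ``$P(T_k\le t)\le P(\mathrm{Poisson}(\phi(t))\ge k)$'' is incorrect, and in fact is backwards. The original searcher, present at time $0$, is not one of the thinned Poisson points: the number of searchers that have found the target by $t$ is $N_0+N_{\mathrm{imm}}$ with $N_0\sim\mathrm{Bernoulli}(1-S(t))$ independent of $N_{\mathrm{imm}}\sim\mathrm{Poisson}(\phi(t))$, so $P(T_k\le t)=P(N_0+N_{\mathrm{imm}}\ge k)\ge P(\mathrm{Poisson}(\phi)\ge k)$. Starting from Lemma~\ref{lem:Tksurvival} one gets the exact identity $P(T_k\le t)=P(\mathrm{Poisson}(\phi)\ge k)+e^{-\phi}(1-S(t))\,\phi^{k-1}/(k-1)!$, so there is a missing additive term carrying the factor $1-S(t)$, and the best Poisson-only bound obtainable by dropping $S(t)\ge0$ is $P(T_k\le t)\le P(\mathrm{Poisson}(\phi)\ge k-1)$, which is vacuous when $k=1$. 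The paper avoids this entirely by the one-line observation $P(T_k>t)\ge P(T_1>t)=S_I(t)$, which reduces the left tail directly to Lemma~\ref{lem:limitingproofint2} with no new estimates. (Your remark that $\phi(b_\lambda-a_\lambda y)$ decays ``doubly exponentially in $y$'' is also imprecise: for fixed $y$ the construction gives $\phi(b_\lambda-a_\lambda y)\to e^{-y}$, single exponential decay; that is still enough, but the claim as stated is not right.)

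\textbf{Right tail.} You correctly flag the ``principal obstacle'' — making $\phi(b_\lambda+a_\lambda x)\gtrsim e^x$ hold uniformly over a $\lambda$-dependent range — but you leave it unfilled, and it is genuinely delicate (the paper proves a companion \emph{upper} bound of exactly this flavor in Lemma~\ref{lem:uniformconverge}, and it takes real work). The paper sidesteps the need for an exponential lower bound altogether: it uses the far cruder $\phi(a_\lambda x+b_\lambda)\ge\lambda a_\lambda(1-S(a_\lambda+b_\lambda))(x-1)=A_\lambda(x-1)$, with $A_\lambda\to e$, and then controls the extra polynomial factor $\phi^j$ by splitting on whether $\ln\phi\le\phi/(2j)$ (so that $e^{-\phi}\phi^j\le e^{-\phi/2}$) or not (which confines $x$ to a bounded set). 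You would need either to prove the uniform exponential lower bound or to adopt some device like the paper's split to absorb the $\phi^j$ factors; at present the proposal contains a plan but not a proof for this piece.

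In the power-law case your two-regime split of $\phi$ is sound and essentially parallels the paper's argument, so that part is fine.
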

Hence, if $1-S(t) \sim At^p$, then the mean of $T_k$ decays according to
\begin{align*}
    \E[T_k]
    =\Gamma\bigg(k+\frac{1}{p+1}\bigg)\bigg(\frac{p+1}{A\lambda}\bigg)^{1/(p+1)}
    +o(\lambda^{-1/(p+1)})\quad\text{as }\lambda\to\infty.
\end{align*}
Moreover, if $1-S(t)\sim At^pe^{-C/t}$, then the mean of $T_k$ has the following three-term asymptotic expansion,
\begin{align*}
    &\E[T_k]
    =b_\lambda+\psi(k) a_\lambda
    +o(a_\lambda)\\
    &\;=C\Bigg[\frac{1}{\ln (C\lambda)}+\frac{(p+2)\ln(\ln(C\lambda))}{(\ln (C\lambda))^2}+\frac{(\psi(k)-\ln(AC^p))}{(\ln (C\lambda))^2}\Bigg]+o((\ln\lambda)^{-2})\;\text{as }\lambda\to\infty,
\end{align*}
where $\psi(k)=\Gamma'(k)/\Gamma(k)=-\gamma+H_{k-1}$ is the digamma function and $H_{k-1}=\sum_{j=1}^{k-1}\frac{1}{j}$ is the $(k-1)$-st harmonic number.
%%%%%%%%%%%%%%%%%%%%%%%%%%%%%%%%%%%%%%%%%%%%%%%%%%%%%%%%%%%%%%%%%%%%%%%%%%%%%%%%
\section{Numerical examples}
\label{sec:numerics}
We now compare the results of section~\ref{sec:results} to numerical computations for three examples of stochastic search; one-dimensional diffusion, three-dimensional diffusion, and a random walk on a discrete grid.

%%%%%%%%%%%%%%%%%%%%%%%%%%%%%%%%%%%%%%%%%%%%%%%%%%%%%%%%%%%%%%%%%%%%%%%%%%%%%%%%%%%%%%%%%%%%%%%%%%%%%%%%%%%%%%%%%%%%%%%%
\subsection{Diffusive search in one space dimension}
The first example is diffusive searchers on the half-line $(0,\infty)$. Specifically, searchers immigrate in to starting position $L>0$ and diffuse with diffusivity $D>0$ until they hit the target located at the origin. The survival probability for a single searcher after entering the system is \cite{carslaw1959}
$$
S(t)
=P(\tau>t)
= \text{erf}\bigg(\sqrt{\frac{L^2}{4Dt}}\bigg),
$$
where erf is the error function. It follows that
$$
1-S(t) \sim At^pe^{-C/t}, \qquad A = \sqrt{\frac{4D}{L^2\pi}}, \qquad p= \frac{1}{2}, \qquad C = \frac{L^2}{4D}.
$$
% where
% \begin{align*}
%     A
%     = \sqrt{\frac{4D}{L^2\pi}},
%     %=\sqrt{\frac{4}{\pi}},
%     \qquad p= \frac{1}{2},
%     \qquad C = \frac{L^2}{4D}. % =\frac{1}{4}
% \end{align*}

%%%%%%%%%%%%%%%%%%%%%%%%%%%%%%%%%%%%%%%%%%%%%%%%%%%%%%%%%%%%%%%%%%%%%%%%%%%%%%%%%%%%%%%%%%%%%%%%%%%%%%%%%%%%%%%%%%%%%%%%
\subsection{Diffusive escape in three space dimensions}
The second example is the time for diffusion processes to escape a three-dimensional sphere centered on their starting location. The survival probability of $\tau$ is \cite{carslaw1959}
$$
S(t) = 1-\sqrt{\frac{4L^2}{\pi D t}}\sum_{j=0}^\infty \exp\left[-\frac{L^2}{4Dt}(2j+1)^2\right],
$$
where $L>0$ is the radius of the sphere and $D>0$ is the searcher diffusivity. Hence,
$$
1-S(t) \sim At^pe^{-C/t}, \qquad A = \sqrt{\frac{4L^2}{D\pi}}, \qquad p= -\frac{1}{2},\qquad C = \frac{L^2}{4D}.
$$
% where
% $$A
% = \sqrt{\frac{4L^2}{D\pi}},
% %=\sqrt{\frac{4}{\pi}},
% \qquad p= -\frac{1}{2},
% \qquad C = \frac{L^2}{4D}. % =\frac{1}{4}.
% $$

%%%%%%%%%%%%%%%%%%%%%%%%%%%%%%%%%%%%%%%%%%%%%%%%%%%%%%%%%%%%%%%%%%%%%%%%%%%%%%%%%%%%%%%%%%%%%%%%%%%%%%%%%%%%%%%%%%%%%%%%
\subsection{Search on a discrete network}
The third example takes place on a $5 \times 5$ discrete square grid with rate $1$ movement between neighboring vertices. The initial location is the upper left vertex of the grid as $(0, 0)$ and the target location $(2, 1)$ is two to the right and one down. Using well known results of Markov chains \cite{norris1998}, it is straightforward to find the survival probability as the product of elementary row vectors and the matrix exponential of an appropriately constructed rate matrix. For the short time behavior, as described in Section \ref{sec:model} above, Proposition~1 in \cite{lawley2020networks} implies 
$$
1-S(t) \sim At^p,\qquad A = \frac{1}{2}, \qquad p = 3.
$$
% where
% $$
% A = \frac{1}{2}, \qquad p = 3.
% $$

\subsection{Comparison to numerics}
For these three examples, Figure \ref{fig:numericsLimitingDist} compares the probability densities and means of the fastest FPT $T_1$ to their limits obtained in section~\ref{sec:results}.
\begin{figure}[ht]
    \centering
    \includegraphics[width=0.95\linewidth]{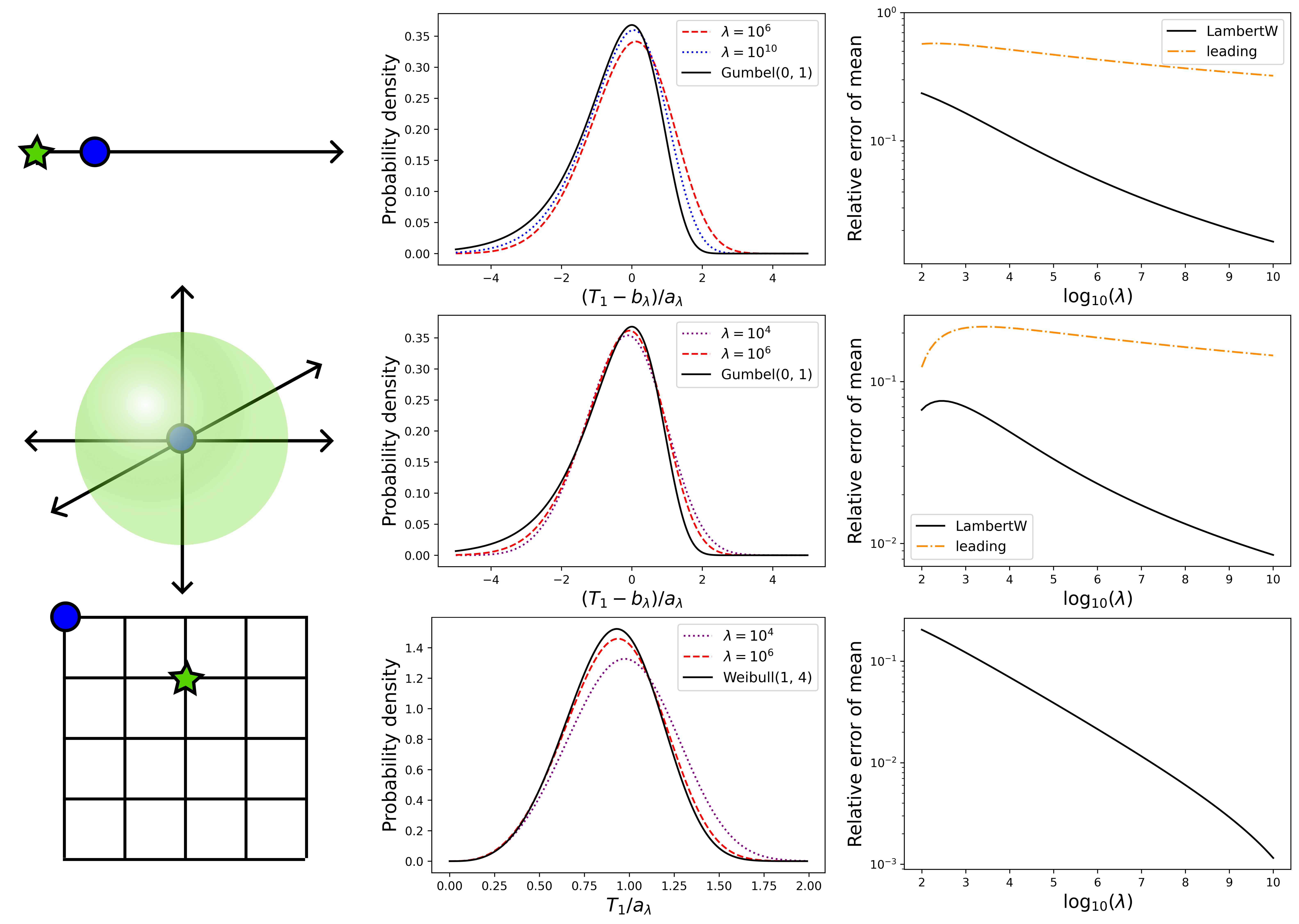}
    \caption{The first column depicts the three search examples used for numerics. Searchers start at the blue ball and their target is the green (the star, the sphere, and the star). The second column shows corresponding plots
    of the probability densities of scaled first passage time $T_1$ for large $\lambda$ as well as the theoretical limiting distribution of Weibull or Gumbel given by Theorems \ref{thm:distpower} and \ref{thm:distexp}.  The third column plots the relative error in estimating the mean fastest FPT, $\E[T_1]$, using \eqref{eq:estimatePower}, \eqref{eq:estimateExp}, or \eqref{eq:leading} as a function of immigration rate $\lambda$. We take $L=D=1$ for the diffusion examples.}
    \label{fig:numericsLimitingDist}
\end{figure}
If $1-S(t) \sim At^p$, then we use Theorem \ref{thm:momentpower} to estimate $\E[T_1]$ as
\begin{equation}
    \widehat{T_1} := \left(\frac{A\lambda}{p+1}\right)^{-1/(p+1)}\Gamma\left(1+\frac{1}{p+1}\right).
    \label{eq:estimatePower}
\end{equation}
Similarly, when $1-S(t) \sim At^pe^{-C/t}$ we use Theorem \ref{thm:momentexp} to estimate $\E[T_1]$ as
\begin{equation}
    \widehat{T_1} := b_\lambda-a_\lambda \gamma,
    \label{eq:estimateExp}
\end{equation}
where $\gamma$ is the Euler-Mascheroni constant and $a_\lambda$ and $b_\lambda$ involve the LambertW function as in Corollary \ref{coro:lamWparamOfABlam}. We also consider the leading order approximation,
\begin{equation}
    \widehat{T_1} := C/\ln(C\lambda).%-a_\lambda \gamma
    \label{eq:leading}
\end{equation}
The relative error $1-\widehat{T_1}/\E[T_1]$ of using these different mean FPT estimates is shown in the last column of Figure \ref{fig:numericsLimitingDist}.

%%%%%%%%%%%%%%%%%%%%%%%%%%%%%%%%%%%%%%%%%%%%%%%%%%%%%%%%%%%%%%%%%%%%%%%%%%%%%%%%%%%%%%%%%%%%%%%%%%%%%%%%%%%%%%%%%%%%%%%%%%%%%%%%%%%%%%%%%%%%%%%%%%%%%%%%%%%%%%%%%%%%%%%%%%%%%%%%%%%%%%%%%%%%%%%%%%%%%%%%%%%%%%%%%%%%%%%%%%%%%%%%%%%%%%%%%%%%%%%%%%%%%%%%%%%%%%%%%%%%%%%%%%%%%%%%%%%%%%%%%%%%%%%%%%%%%%%%%%%%%%%%%%%%%%%%%%%%%%%%%%%%%%%%%%%%%%%%%%%%%%%%%%%%%%%%%%%%%%%%
\section{Discussion}\label{sec:discussion}
In this paper, we examined the $k$-th fastest FPTs of a process where searchers immigrate into a system at rate $\lambda$ and move until they hit the target. We considered searchers with one of two survival probability short-time behaviors, which covers very general classes of search problems spanning from diffusion in continuous space to random walks on graphs. We first rederived an equation relating the survival probability $S(t)$ of a single searcher to the survival probability $S_I(t)$ of the process process with immigration \cite{campos2024}. Then, we found that when the immigration rate $\lambda$ is large, the limiting distributions for the survival probability $S_I$ are Gumbel or Weibull and we found the limit of the moments of the $k$-th fastest FPT based on these limiting distributions. Lastly, we performed numerical computations to illustrate our results in some canonical examples and demonstrate the convergence.

Mathematically, our analysis amounted to determining the extreme values of an infinite sequence of non-independent and non-identically distributed random variables. Our results constitute a rather rare case in which extreme value statistics have been determined analytically for such random variables. Indeed, a recent review \cite{majumdar2020} of extreme value theory notes that ``very few exact results'' are known for extreme value statistics of strongly correlated random variables, such as those that we considered.

One natural question is how our results on search with immigration compare with previous work on FPTs where there is no immigration but instead a large initial number of searchers \cite{weiss1983, lawley2020dist}. If there are $N\gg1$ initial iid searchers, each with survival probability $S_0(t)$, then the survival probability of the fastest searcher is
\begin{align*}%\label{eq:S0N}
    (S_0(t))^N = \exp\left[N \ln(S_0(t))\right] \approx \exp[-N(1-S_0(t))],
\end{align*}
where the approximation is due to Taylor expanding $\ln(S(t))$ at small time $t$, which is appropriate since $(S_0(t))^N$ vanishes exponentially fast as $N\to\infty$ outside of a neighborhood of $t=0$. In order to find an equivalent search with immigration process, suppose searchers with survival probability $S(t)$ immigrate at rate $\lambda$ and recall \eqref{eqn:SIequality} to obtain the requirement that
\begin{align*}
    (S_0(t))^N
    \approx \exp[-N(1-S_0(t))]
    \approx S(t) \exp\bigg[-\lambda\int_0^t(1-S(s))ds\bigg].
\end{align*}
Hence, the fastest FPTs for (i) $N\gg1$ initial searchers with survival probability $S_0(t)$ and (ii) searchers with survival probability $S(t)$ which immigrate at rate $\lambda$ are roughly equivalent if
\begin{align*}
    N(1-S_0(t))
    \approx\lambda \int_0^t (1-S(s))ds\quad\text{at small time }t.
\end{align*}
Therefore, if a search with fast immigration process satisfies $1-S(t)\sim At^p$, then the equivalent many initial searcher process satisfies $1-S_0(t)\sim A_0t^{p_0}$ with $A_0=A^{1+1/p}/(p+1)$, $p_0=p+1$, and $N=\lambda A^{-1/p}\gg1$. Similarly, if $1-S(t)\sim At^pe^{-C/t}$, then the equivalent many initial searcher process satisfies $1-S_0(t)\sim A_0t^{p_0}e^{-C_0/t}$  with $C_0=C>0$, $A_0=A/C^2$, $p_0=p+2$, and $N=\lambda C\gg1$. Though this argument is heuristic, comparing Theorem~\ref{thm:distpower} above with Theorem~2 in \cite{madrid2020comp} and Theorem~\ref{thm:distexp} above with Theorem~1 in \cite{lawley2020dist} shows that the heuristic agrees with the rigorous theory.

\appendix
\section{Proofs of theorems}

We now prove the theorems of section~\ref{sec:results}.

\subsection{Proving Theorems \ref{thm:distpower} and \ref{thm:distexp}}
\begin{proof}[Proof of Theorem~\ref{thm:distpower}]
    Let $\eps\in(0,1)$ and define $S_{\pm}(t):=1-(1\mp\eps)At^p$. By \eqref{eq:assumptionexp}, there exists a $\delta>0$ so that $S_-(t)
        \le S(t)
        \le S_+(t)$ if $t\in(0,\delta]$. Hence,
    \begin{align*}
        &S_-(t)\exp\Big[-\lambda\int_0^t(1-S_-(s))\,\dd s\Big]
        \le S_I(t)
        =S(t)\exp\Big[-\lambda\int_0^t(1-S(s))\,\dd s\Big]\\
        &\quad\le S_+(t)\exp\Big[-\lambda\int_0^t(1-S_+(s))\,\dd s\Big]\quad\text{if }t\in(0,\delta].
    \end{align*}
    A direct calculation shows that
    \begin{align*}
        &\lim_{\lambda\to\infty}S_\pm(a_\lambda x)\exp\Big[-\lambda\int_0^{a_\lambda x}(1-S_\pm(s))\,\dd s\Big]
        =\exp(-(1\mp \eps)x^{p+1})\quad\text{if }x\in\R.
    \end{align*}
    Since $\lim_{\lambda\to\infty}a_\lambda=0$ and $\eps\in(0,1)$ is arbitrary, the proof is complete.
\end{proof}

\begin{proof}[Proof of Theorem~\ref{thm:distexp}]
    Let $\eps\in(0,1)$ and define $S_{\pm}(t):=1-(1\mp\eps)At^pe^{-C/t}$. By \eqref{eq:assumptionexp}, there exists a $\delta>0$ so that $S_-(t)
        \le S(t)
        \le S_+(t)$ if $t\in(0,\delta]$. Hence,
    \begin{align*}
        &S_-(t)\exp\Big[-\lambda\int_0^t(1-S_-(s))\,\dd s\Big]
        \le S_I(t)
        =S(t)\exp\Big[-\lambda\int_0^t(1-S(s))\,\dd s\Big]\\
        &\quad\le S_+(t)\exp\Big[-\lambda\int_0^t(1-S_+(s))\,\dd s\Big]\quad\text{if }t\in(0,\delta].
    \end{align*}
    A direct calculation shows that
    \begin{align*}
        &\lim_{\lambda\to\infty}S_\pm(a_\lambda x+b_\lambda)\exp\Big[-\lambda\int_0^{a_\lambda x+b_\lambda}(1-S_\pm(s))\,\dd s\Big]
        =\exp(-(1\mp \eps)e^x)\quad\text{if }x\in\R.
    \end{align*}
    Since $\lim_{\lambda\to\infty}a_\lambda=\lim_{\lambda\to\infty}b_\lambda=0$ and $\eps\in(0,1)$ is arbitrary, the proof is complete.
\end{proof}

\begin{lemma}\label{lem:power}
    Assume
    \begin{align}\label{eq:assumptionpower2}
        F(t)
        \sim At^p\quad\text{as }t\to0+,
    \end{align}
    where $A>0$ and $p>0$. If $a_\lambda=(A\lambda/(p+1))^{-1/(p+1)}$, then for each $x\in\R$,
    \begin{align*}
        \lambda\int_0^{a_\lambda x}F(s)\,\dd s\to x^{p+1}\quad\text{as }\lambda\to\infty.
    \end{align*}
\end{lemma}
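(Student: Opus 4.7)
The plan is to establish the claim by a straightforward sandwich argument built from the asymptotic hypothesis \eqref{eq:assumptionpower2}. The essential observation is that $a_\lambda\to 0$ as $\lambda\to\infty$ (since $p+1>0$), so for fixed $x$ the upper limit of integration shrinks into the regime where the asymptotic behavior of $F$ is effective. The case $x=0$ is trivial; I focus on $x>0$ (and the lemma as used in the theorem proofs above only requires $x\in\R$ with the convention that for $x\le 0$ the statement follows by a symmetric argument or is not needed).

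First, I would fix an arbitrary $\eps\in(0,1)$ and invoke \eqref{eq:assumptionpower2} to produce a $\delta>0$ such that
\begin{align*}
(1-\eps)At^{p}\le F(t)\le (1+\eps)At^{p}\quad\text{for all }t\in(0,\delta].
\end{align*}
Next, since $a_\lambda\to 0$, I would pick $\lambda_0$ so that $a_\lambda x\le \delta$ whenever $\lambda\ge \lambda_0$. Integrating the two-sided bound on $[0,a_\lambda x]$ and multiplying by $\lambda$ yields
\begin{align*}
(1-\eps)\,\lambda\cdot\frac{A(a_\lambda x)^{p+1}}{p+1}\;\le\;\lambda\int_0^{a_\lambda x}F(s)\,\dd s\;\le\;(1+\eps)\,\lambda\cdot\frac{A(a_\lambda x)^{p+1}}{p+1}.
\end{align*}
Substituting the definition $a_\lambda^{p+1}=(p+1)/(A\lambda)$ collapses the outer expressions to $(1\mp\eps)x^{p+1}$, giving
\begin{align*}
(1-\eps)x^{p+1}\le\liminf_{\lambda\to\infty}\lambda\int_0^{a_\lambda x}F(s)\,\dd s\le \limsup_{\lambda\to\infty}\lambda\int_0^{a_\lambda x}F(s)\,\dd s\le (1+\eps)x^{p+1}.
\end{align*}
Since $\eps\in(0,1)$ was arbitrary, letting $\eps\to 0$ delivers the claimed limit $x^{p+1}$.

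There is no real obstacle here: the argument is a direct sandwich calculation relying only on the pointwise asymptotic for $F$ and the explicit cancellation engineered by the choice of $a_\lambda$. The only mild subtlety is ensuring that $a_\lambda x$ lies inside the validity region $(0,\delta]$ of the asymptotic bounds, which is immediate from $a_\lambda\to 0$ for fixed $x>0$. Note that no hypothesis is needed on $F$ away from zero, which is why the proof works even though $F$ may be large for $s$ bounded away from zero.
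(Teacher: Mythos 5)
Your proposal is correct and takes essentially the same approach as the paper: sandwich $F$ between $(1\pm\eps)At^{p}$ near zero, integrate on $[0,a_\lambda x]$ (valid once $a_\lambda x\le\delta$), and observe that the choice of $a_\lambda$ collapses the bounds to $(1\pm\eps)x^{p+1}$. The only difference is that you spell out the substitution $a_\lambda^{p+1}=(p+1)/(A\lambda)$ and the choice of $\lambda_0$ explicitly, which the paper leaves implicit.
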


\begin{proof}[Proof of Lemma~\ref{lem:power}]
    Let $\eps\in(0,1)$ and define $F_{\pm}(t):=(1\pm\eps)At^p$. By \eqref{eq:assumptionpower2}, there exists a $\delta>0$ so that $F_-(t)
        \le F(t)
        \le F_+(t)$ if $t\in(0,\delta]$. Hence,
    \begin{align*}
        &\lambda\int_0^t F_-(s)\,\dd s
        \le \lambda\int_0^t F(s)\,\dd s
        \le\lambda\int_0^t F_+(s)\,\dd s\quad\text{if }t\in(0,\delta].
    \end{align*}
    Now, 
    \begin{align*}
    \lambda\int_0^{a_\lambda x} F_{\pm}(s)\,\dd s
        &=(1\pm\eps)x^{p+1}\quad\text{if }x\in\R.
    \end{align*}
    Since $\lim_{\lambda\to\infty}a_\lambda=0$ and $\eps\in(0,1)$ is arbitrary, the proof is complete.
\end{proof}

\begin{lemma}\label{lem:exp}
    Assume
    \begin{align}\label{eq:assumptionexp2}
        F(t)
        \sim At^pe^{-C/t}\quad\text{as }t\to0+,
    \end{align}
    where $C>0$, $A>0$, and $p\in\R$. If
    \begin{align*}
        a_\lambda
        =\frac{C}{(\ln (C\lambda))^2},\quad
        b_\lambda
        =\frac{C}{\ln (C\lambda)}
        +\frac{C(p+2)\ln(\ln(C\lambda))}{(\ln (C\lambda))^2}
        -\frac{C\ln(AC^p)}{(\ln (C\lambda))^2},
    \end{align*}
    then for each $x\in\R$,
    \begin{align*}
        \lambda\int_0^{a_\lambda x+b_\lambda}F(s)\,\dd s\to e^x\quad\text{as }\lambda\to\infty.
    \end{align*}
\end{lemma}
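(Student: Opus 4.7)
The plan is to mirror the strategy used for Lemma~\ref{lem:power}: exploit the asymptotic assumption to sandwich $F$ between $(1\mp\eps) A t^p e^{-C/t}$ near zero, and then carry out an explicit asymptotic analysis on the two sandwich functions. Fix $\eps \in (0,1)$ and set $F_\pm(t) := (1 \pm \eps) A t^p e^{-C/t}$. By \eqref{eq:assumptionexp2}, there exists $\delta > 0$ such that $F_-(t) \le F(t) \le F_+(t)$ for $t \in (0, \delta]$. Since $a_\lambda \to 0$ and $b_\lambda \to 0$ as $\lambda \to \infty$, the endpoint $t_\lambda := a_\lambda x + b_\lambda$ lies in $(0,\delta]$ for all sufficiently large $\lambda$, so
\begin{align*}
    \lambda \int_0^{t_\lambda} F_-(s)\, \dd s \le \lambda \int_0^{t_\lambda} F(s)\, \dd s \le \lambda \int_0^{t_\lambda} F_+(s)\, \dd s
\end{align*}
for such $\lambda$. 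Thus it suffices to show $\lambda \int_0^{t_\lambda} F_\pm(s)\, \dd s \to (1 \pm \eps) e^x$ and then send $\eps \downarrow 0$.

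Next, I would evaluate $\int_0^{t_\lambda} s^p e^{-C/s}\, \dd s$ asymptotically as $t_\lambda \to 0^+$. The substitution $u = C/s$ converts this integral to $C^{p+1} \int_{C/t_\lambda}^\infty u^{-p-2} e^{-u}\, \dd u$, an upper incomplete gamma function. The classical asymptotic $\int_z^\infty u^{-p-2} e^{-u}\, \dd u \sim z^{-p-2} e^{-z}$ as $z \to \infty$ (which follows from a single integration by parts, or equivalently from Laplace's method applied at the upper endpoint of the original integral) then yields
\begin{align*}
    \int_0^{t_\lambda} s^p e^{-C/s}\, \dd s \sim \frac{t_\lambda^{p+2}}{C}\, e^{-C/t_\lambda},
\end{align*}
so it remains to verify that $(1 \pm \eps)(A\lambda/C)\, t_\lambda^{p+2}\, e^{-C/t_\lambda} \to (1 \pm \eps) e^x$ as $\lambda\to\infty$.

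Finally, I would carry out this check by writing $L := \ln(C\lambda)$, so that $a_\lambda = C/L^2$ and the given $b_\lambda$ yields $t_\lambda = C/L + O(\ln L / L^2)$. Expanding $C/t_\lambda = L \bigl[1 + O(\ln L/L)\bigr]^{-1}$ as a geometric series gives
\begin{align*}
    \frac{C}{t_\lambda} = L - (p+2)\ln L + \ln(AC^p) - x + o(1),
\end{align*}
so $e^{-C/t_\lambda} \sim (C\lambda)^{-1}\, L^{p+2}\, (AC^p)^{-1}\, e^x$. Combined with $t_\lambda^{p+2} \sim (C/L)^{p+2}$, the factors of $A$, $C^{p+2}$, $\lambda$, and $L^{p+2}$ cancel exactly, producing the limit $(1 \pm \eps) e^x$.

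The main obstacle is the last step: the specific form of $b_\lambda$ is tuned precisely to cancel the subleading $L^{p+2}(AC^p)^{-1}$ prefactor that emerges from the expansion of $e^{-C/t_\lambda}$. To see this cancellation requires expanding $C/t_\lambda$ to $o(1)$ accuracy, i.e.\ two orders beyond its leading term $L$, and verifying that each correction term within the exponential reproduces precisely one of the prefactors that needs to be killed. Everything else reduces to routine sandwiching and the standard incomplete-gamma asymptotic.
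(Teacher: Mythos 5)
Your proof is correct and follows essentially the same route as the paper: the same $\varepsilon$-sandwich by $F_\pm$, the same reduction to an upper incomplete gamma function (the paper writes $\lambda\int_0^t F_\pm(s)\,\dd s = \lambda(1\pm\varepsilon)AC^{p+1}\Gamma(-p-1,C/t)$, which is exactly your change of variables), the same asymptotic $\Gamma(r,z)\sim z^{r-1}e^{-z}$, and then the final check. You have merely spelled out the ``direct calculation'' that the paper leaves implicit, and your expansion $C/t_\lambda = L - (p+2)\ln L + \ln(AC^p) - x + o(1)$ and subsequent cancellation are correct.
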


\begin{proof}[Proof of Lemma~\ref{lem:exp}]
    Let $\eps\in(0,1)$ and define $F_{\pm}(t):=(1\pm\eps)At^pe^{-C/t}$. By \eqref{eq:assumptionexp2}, there exists a $\delta>0$ so that $F_-(t)
        \le F(t)
        \le F_+(t)$ if $t\in(0,\delta]$. Hence,
    \begin{align*}
        &\lambda\int_0^t F_-(s)\,\dd s
        \le \lambda\int_0^t F(s)\,\dd s
        \le\lambda\int_0^t F_+(s)\,\dd s\quad\text{if }t\in(0,\delta].
    \end{align*}
    Now, 
    \begin{align*}
        \lambda\int_0^t F_{\pm}(s)\,\dd s
        =\lambda (1\pm\eps) A C^{p+1} \Gamma (-p-1,C/t),
    \end{align*}
    where $\Gamma(r,z)=\int_z^\infty u^{r-1}e^{-u}\,\dd u$ is the upper incomplete gamma function. Hence, it follows from the asymptotics, $\Gamma(r,z)
        \sim z^{r-1}e^{-z}$ as $z\to\infty$, that
    \begin{align*}
    \lim_{\lambda\to\infty}\lambda\int_0^{a_\lambda x+b_\lambda} F_{\pm}(s)\,\dd s
        &=(1\pm\eps)A C^{p+1}\lim_{\lambda\to\infty}\lambda  \Gamma (-p-1,C/(a_\lambda x+b_\lambda))\\
        &=(1\pm\eps)e^x\quad\text{if }x\in\R.
    \end{align*}
    Since $\lim_{\lambda\to\infty}a_\lambda=\lim_{\lambda\to\infty}b_\lambda=0$ and $\eps\in(0,1)$ is arbitrary, the proof is complete.
\end{proof}

\subsection{Proving Theorems \ref{thm:momentpower} and \ref{thm:momentexp}}
Let $T_1$ be the FPT in \eqref{eq:T1} when the immigration rate is $\lambda$. Further, let
$$
\widetilde{T_1} = \frac{T_1-b_{\lambda}}{a_\lambda},
$$
where $a_\lambda$ and $b_\lambda$ copied from Theorems \ref{thm:distpower} and \ref{thm:distexp} depending on the short-time behavior of $1-S(t)$ (we set $b_\lambda = 0$ if $1-S(t)\sim At^p$).

%$$\begin{array}{|c|c|c|}
%\hline
 %1-S(t)\sim        & a_\lambda & b_\lambda \\\hline
 %At^p           & (A\lambda/(p+1))^{-1/(p+1)} & 0 \\\hline
 %At^pe^{-C/t}   & \frac{C}{(\ln (C\lambda))^2} & \frac{C}{\ln (C\lambda)}
  %      +\frac{C(p+2)\ln(\ln(C\lambda))}{(\ln (C\lambda))^2}
   %     -\frac{C\ln(AC^p)}{(\ln (C\lambda))^2}\\
%\hline
%\end{array}$$

The goal of this appendix is to show that the moments of $\widetilde{T_1}$ converge to the moments of the limiting distribution. It is well known that this can be achieved by showing that for any positive integer $m$, the set of random variables $\{\widetilde{T_1}^m\}_{\lambda>\lambda^*}$ is uniformly integrable (see, for example, Theorem~3.5 in \cite{billingsley2013}).

Recall that the definition of uniform integrability is
$$
\lim_{K\rightarrow \infty} \sup_{\lambda>\lambda^*} \E[|\widetilde{T_1}|^m \mathbbm{1}_{\{|\widetilde{T_1}|^m > K^m\}}] = 0.
$$
First, we bound the expected value for some $K>0$. Noting that $|\widetilde{T_1}|^m \mathbbm{1}_{\{|\widetilde{T_1}|^m > K^m\}}$ is a nonnegative random variable, its $m$th moment is given by
\begin{align*}
    \E[|\widetilde{T_1}|^m \mathbbm{1}_{\{|\widetilde{T_1}|^m > K^m\}}] &= \int_0^{\infty} mx^{m-1}P(|\widetilde{T_1}| \mathbbm{1}_{\{|\widetilde{T_1}|^m > K^m\}} > x) dx \\   
    &= \int_K^{\infty} mx^{m-1}P(|\widetilde{T_1}| > x) dx \\ 
    &= \int_K^{\infty} mx^{m-1}(P(\widetilde{T_1} > x) + 1 -P(\widetilde{T_1} > -x)) dx.
\end{align*}
Noting that
\begin{align*}
    P(\widetilde{T_1} > x)
    &= P(T_1 > a_{\lambda} x + b_{\lambda})
    = S_I(a_{\lambda} x + b_{\lambda}),\\
    P(\widetilde{T_1} > -x)
    &= P(T_1 > -a_{\lambda} x + b_{\lambda})
    = S_I(-a_{\lambda} x + b_{\lambda}),
\end{align*}
we can decompose $\E[|\widetilde{T_1}|^m \mathbbm{1}_{\{|\widetilde{T_1}|^m > K^m\}}]$ into the sum of the following two terms,
\begin{align}
    &\int_K^{\infty} mx^{m-1} S_I(a_{\lambda} x + b_{\lambda}) dx 
    \label{eqn:limitingproofint1},\\
    &\int_K^{\infty} mx^{m-1} (1-S_I(-a_{\lambda} x + b_{\lambda})) dx. 
    \label{eqn:limitingproofint2}
\end{align}

\begin{lemma}
    For either short time decay behavior of the survival probability,
    $$\lim_{K\rightarrow \infty}\sup_{\lambda>\lambda^*}\int_K^{\infty} mx^{m-1} S_I(a_{\lambda} x + b_{\lambda}) dx=0. $$
    \label{lem:limitingproofint1}
\end{lemma}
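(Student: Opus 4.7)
The plan is to produce a dominating function $g(x)$ with $S_I(a_\lambda x + b_\lambda) \le g(x)$ uniformly in $\lambda > \lambda^*$ such that $\int_K^\infty mx^{m-1}g(x)\,dx\to 0$ as $K\to\infty$. The starting point is the pointwise bound $S_I(t) \le e^{-\lambda \Phi(t)}$, where $\Phi(t) := \int_0^t(1-S(s))\,ds$, obtained by discarding the factor $S(t)\le 1$ in \eqref{eqn:SIequality}.

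I would fix $\delta>0$ small enough that $1-S(s)\ge \tfrac{1}{2}F(s)$ on $(0,\delta]$, where $F(s)=As^p$ or $F(s)=As^pe^{-C/s}$ is the short-time asymptotic. Mimicking the calculation from the proofs of Lemmas~\ref{lem:power} and \ref{lem:exp}, this yields, for $\lambda$ sufficiently large and any $x$ with $a_\lambda x + b_\lambda \le \delta$, a short-time estimate of the form $\lambda\Phi(a_\lambda x + b_\lambda)\ge \tfrac{1}{4}\phi(x)$, where $\phi(x)=x^{p+1}$ (power-law) or $\phi(x)=e^x$ (exponential); in the exponential case this uses the uniform version of $\Gamma(r,z)\sim z^{r-1}e^{-z}$ as $z\to\infty$. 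Hence $S_I(a_\lambda x + b_\lambda)\le e^{-\phi(x)/4}$ on this range, whose integral against $mx^{m-1}\,dx$ on $[K,\infty)$ is $\lambda$-independent and tends to $0$ as $K\to\infty$. Outside this window, i.e.\ for $a_\lambda x + b_\lambda > \delta$, I would use the linear lower bound $\Phi(t)\ge \Phi(\delta) + \beta(t-\delta)$ for $t\ge\delta$, where $\beta := 1-S(\delta)>0$.

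Finally, the proof splits into two cases according to whether $K\le x^* := (\delta-b_\lambda)/a_\lambda$. In the large-$\lambda$ case ($\lambda\ge\lambda_K$, where $\lambda_K$ is defined by $a_{\lambda_K}K + b_{\lambda_K}=\delta$), the short-time bound handles the integral over $[K,x^*]$, and the substitution $u = \lambda\beta a_\lambda(x - x^*)$ reduces the tail integral over $[x^*,\infty)$ to $C\,e^{-\lambda\Phi(\delta)}$ times polynomial factors in $\lambda a_\lambda$ and $x^*$, whose supremum over $\lambda\ge\lambda_K$ is attained near $\lambda_K$ and vanishes as $K\to\infty$ since $\lambda_K\to\infty$. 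In the small-$\lambda$ case ($\lambda^* < \lambda < \lambda_K$, so $\lambda$ is bounded depending on $K$), the entire integral lies in the long-time regime; an analogous substitution gives an upper bound of the form $e^{-\lambda\Phi(a_\lambda K + b_\lambda)}$ times a polynomial factor in $K$, and since $\lambda a_\lambda$ is bounded below by a positive constant on $(\lambda^*,\infty)$ (in the power-law case $\lambda a_\lambda \asymp \lambda^{p/(p+1)}$; in the exponential case $\lambda a_\lambda = \lambda C/(\ln C\lambda)^2$), one has $a_\lambda K + b_\lambda > \delta$ with $\lambda a_\lambda K \to\infty$ as $K\to\infty$ uniformly in $\lambda$, so the exponential factor overwhelms the polynomial factor. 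The main technical obstacle is the bookkeeping in this second case, in particular verifying the uniform-in-$\lambda$ growth of $\lambda\Phi(a_\lambda K + b_\lambda)$ when $\lambda$ is near $\lambda^*$; this rests on $\Phi(\delta)>0$ and the uniform positivity of $\lambda a_\lambda$ on $(\lambda^*,\infty)$.
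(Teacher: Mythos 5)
The paper's proof is considerably shorter and avoids any case split. From $S_I(t)\le\exp\big[-\lambda\int_0^t(1-S(s))\,ds\big]$ it passes directly to a single linear lower bound on the exponent,
$$
\lambda\int_0^{a_\lambda x+b_\lambda}(1-S(s))\,ds \ \ge\ \lambda\int_{a_\lambda+b_\lambda}^{a_\lambda x+b_\lambda}(1-S(s))\,ds \ \ge\ \lambda a_\lambda(x-1)\big(1-S(a_\lambda+b_\lambda)\big)=:A_\lambda(x-1),
$$
using only that $1-S$ is nonnegative and nondecreasing. Since $A_\lambda\to p+1$ (power law) or $A_\lambda\to e$ (exponential), one picks $\lambda^*$ so that $A^*:=\inf_{\lambda>\lambda^*}A_\lambda>0$, and then the single $\lambda$-free dominating function $mx^{m-1}e^{-A^*(x-1)}$ works on all of $[K,\infty)$. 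None of the refined estimates of Lemma~\ref{lem:uniformconverge} are needed in this direction.

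Your short-time bound is false in the exponential case, so the proposal as written has a gap. You assert that for $\lambda$ large and every $x$ with $a_\lambda x+b_\lambda\le\delta$, i.e.\ for all $x$ up to $x^*=(\delta-b_\lambda)/a_\lambda$, one has $\lambda\Phi(a_\lambda x+b_\lambda)\ge\tfrac14 e^x$, where $\Phi(t):=\int_0^t(1-S(s))\,ds$. The pointwise convergence $\lambda\Phi(a_\lambda x+b_\lambda)\to e^x$ from Lemma~\ref{lem:exp} is not uniform over this window. Take $x=x^*$: then $\lambda\Phi(a_\lambda x^*+b_\lambda)=\lambda\Phi(\delta)$, which grows only linearly in $\lambda$, while $a_\lambda=C/(\ln (C\lambda))^2$ gives $x^*\sim\delta(\ln (C\lambda))^2/C$, so $e^{x^*}\approx\exp\big(\delta(\ln (C\lambda))^2/C\big)$, which eventually dwarfs any power of $\lambda$. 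Hence $\lambda\Phi(\delta)<\tfrac14 e^{x^*}$ for all $\lambda$ large, and the bound fails at (and near) the right edge of your short-time window. This is the one-sided nature of Lemma~\ref{lem:uniformconverge} made concrete: the expression \eqref{eq:partofexponent} in its proof is bounded above but tends to $-\infty$ at the ends of its $X$-range, so $\lambda(A/C)t^{p+2}e^{-C/t}e^{-x}$ admits a uniform upper bound but no uniform positive lower bound. (Your power-law short-time estimate is correct, since $\Phi(t)\ge\tfrac12 At^{p+1}/(p+1)$ on $(0,\delta]$ yields $\lambda\Phi(a_\lambda x)\ge\tfrac12 x^{p+1}$ on the whole window, and your long-time estimate and two-case split are broadly workable.) The natural repair is to replace $\tfrac14 e^x$ with a linear-in-$x$ lower bound on $\lambda\Phi$; at that point you have recovered the paper's argument, which applies uniformly on all of $[K,\infty)$ and makes the case analysis unnecessary.
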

\begin{proof}
    Plugging in the equation for $S_I$ in \eqref{eqn:SIequality} yields
\begin{align*}
    \int_K^{\infty} mx^{m-1} S_I(a_{\lambda} x + b_{\lambda}) dx &= \int_K^{\infty} mx^{m-1} S(a_{\lambda} x + b_{\lambda})\exp\left[-\lambda \int_0^{a_{\lambda} x + b_{\lambda}}1-S(s)ds\right] dx \\
    &\leq \int_K^{\infty} mx^{m-1}\exp\left[-\lambda \int_0^{a_{\lambda} x + b_{\lambda}}1-S(s)ds\right] dx \\
    &\leq \int_K^{\infty} mx^{m-1}\exp\left[-\lambda \int_{a_\lambda + b_\lambda}^{a_{\lambda} x + b_{\lambda}}1-S(s)ds\right] dx \\
    &\leq \int_K^{\infty} mx^{m-1}e^{-\lambda a_\lambda (x-1)(1-S(a_\lambda + b_\lambda))} dx.
\end{align*}
The first inequality follows from the bound $S(s)\le1$ since $S$ is a probability. The second follows from $1-S(s)\geq 0$. The third follows from lower bounding the integral by multiplying the length of the interval of integration with the minimum of $1-S(s)$ on the interval, which is when $s=a_\lambda + b_{\lambda}$ by the monotonicity of $S(s)$. As such, it suffices to show that 
$$
\lim_{K\rightarrow \infty}\sup_{\lambda>\lambda^*}\int_K^{\infty} mx^{m-1}e^{-A_\lambda (x-1)} dx=0,
$$
where $A_\lambda = \lambda a_\lambda (1-S(a_\lambda + b_\lambda))$. To do so, we note that when $1-S(t) \sim At^p$ or $1-S(t) \sim At^pe^{-C/t}$,
$$
\lim_{\lambda\rightarrow \infty} A_\lambda= p+1 \text{ or } \lim_{\lambda\rightarrow \infty} A_\lambda=e,
$$
respectively. Recall that $p>0$ when the short term behavior is $1-S(t)\sim At^p$ and therefore both limits are positive. We then pick $\lambda^*$ so that $A_\lambda\ge1/2$ for all $\lambda>\lambda^*$ and define $A^* := \inf_{\lambda > \lambda^*} A_\lambda > 0$. As such,
$$
\lim_{K\rightarrow \infty}\sup_{\lambda>\lambda^*}\int_K^{\infty} mx^{m-1}e^{-A_\lambda (x-1)} dx \leq \lim_{K\rightarrow \infty}\int_K^{\infty} mx^{m-1}e^{-A^* (x-1)} dx = 0.
$$
\end{proof}

\begin{lemma}
    For either short time decay behavior of the survival probability,
    \begin{align}
    \lim_{K\rightarrow \infty}\sup_{\lambda>\lambda^*}\int_K^{\infty} mx^{m-1} (1-S_I(-a_{\lambda} x + b_{\lambda})) dx=0.\label{eq:lemmaint}
        \end{align}\label{lem:limitingproofint2}
\end{lemma}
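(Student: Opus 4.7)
The argument splits according to the short-time behavior of $S$. In the power-law case $1-S(t)\sim At^p$, Theorem~\ref{thm:distpower} sets $b_\lambda=0$, so $-a_\lambda x + b_\lambda<0$ for every $x>0$; since $T_1\ge 0$ almost surely, $S_I(t)=1$ for $t<0$ and the integrand in \eqref{eq:lemmaint} vanishes identically. The substance of the lemma is therefore in the exponential case, which I treat below.

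For $1-S(t)\sim At^p e^{-C/t}$, my plan is to prove a \emph{uniform} exponential tail bound
\[
1 - S_I(-a_\lambda x + b_\lambda) \le M e^{-x}, \qquad \lambda>\lambda^*,\; K_0\le x<b_\lambda/a_\lambda,
\]
for suitable constants $\lambda^*,K_0,M>0$ (the integrand vanishes for $x\ge b_\lambda/a_\lambda$, since $T_1\ge 0$). Given such a bound, for $K\ge K_0$,
\[
\sup_{\lambda>\lambda^*}\int_K^\infty m x^{m-1}(1-S_I(-a_\lambda x + b_\lambda))\,dx \;\le\; M\int_K^\infty mx^{m-1}e^{-x}\,dx \;\to\; 0
\]
as $K\to\infty$, which is \eqref{eq:lemmaint}. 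The bound itself comes from the decomposition $1-S_I(t)\le F(t)+\lambda G_\lambda(t)$ with $F=1-S$ and $G_\lambda(t)=\int_0^t F(s)\,ds$, a consequence of $e^{-u}\ge 1-u$ and \eqref{eqn:SIequality}; it then suffices to bound $F(-a_\lambda x+b_\lambda)$ and $\lambda G_\lambda(-a_\lambda x+b_\lambda)$ by constant multiples of $e^{-x}$.

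Fix $\epsilon\in(0,1)$ and pick $\delta>0$ with $F(s)\le(1+\epsilon)As^p e^{-C/s}$ on $(0,\delta]$; integrating and using the asymptotic $\Gamma(r,z)\sim z^{r-1}e^{-z}$ produces $G_\lambda(t)\le 2(1+\epsilon)AC^{-1}t^{p+2}e^{-C/t}$ for $t$ sufficiently small. Choose $\lambda^*$ so that this and the short-time bound on $F$ apply at $t=-a_\lambda x + b_\lambda$ for $x\in[K_0,b_\lambda/a_\lambda)$. Substituting the explicit $a_\lambda,b_\lambda$ from Theorem~\ref{thm:distexp}, and setting $L=\ln(C\lambda)$, $\beta_\lambda=(p+2)\ln L-\ln(AC^p)$, $u=L+\beta_\lambda-x$ (so that $C/t=L^2/u$), a direct calculation yields the identity
\[
\lambda AC^{-1}t^{p+2}e^{-C/t}\cdot e^x \;=\; (u/L)^{p+2}\,e^{-(u-L)^2/u}.
\]
Viewed as a function of $u>0$, the right side attains its maximum near $u=L$ with value $\exp\bigl((p+2)^2/(4L)+o(1)\bigr)\le 2$ for $L$ sufficiently large. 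This gives $\lambda G_\lambda(-a_\lambda x+b_\lambda)\le 4(1+\epsilon)e^{-x}$ uniformly in $x$ and $\lambda>\lambda^*$. A parallel analysis for $F$ picks up an extra factor $C/(\lambda t^2)=L^4/(u^2 e^L)$ and, after the same critical-point optimization, shows $F(-a_\lambda x+b_\lambda)$ is uniformly negligible compared to $e^{-x}$ for $\lambda$ large; combining the two yields the uniform $Me^{-x}$ bound.

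The principal obstacle is the uniformity in $x$, especially as $x\to b_\lambda/a_\lambda$ where $t\to 0^+$. The critical-point analysis of $(u/L)^{p+2}e^{-(u-L)^2/u}$, showing that the polynomial prefactor contributes only a vanishing logarithmic correction to the Gaussian-type exponent $-(u-L)^2/u$, is the essential technical input that upgrades the pointwise limit $\lambda G_\lambda(-a_\lambda x+b_\lambda)\to e^{-x}$ from Lemma~\ref{lem:exp} into the required uniform bound.
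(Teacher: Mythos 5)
Your proof is correct and follows essentially the same route as the paper: you handle the power-law case via $b_\lambda=0$, decompose $1-S_I\le F+\lambda G_\lambda$, and establish a uniform $O(e^{-x})$ tail through the same critical-point optimization as the paper's Lemma~\ref{lem:uniformconverge}, merely phrased in the affinely equivalent variable $v=L+\beta_\lambda-x$ (so your identity $(v/L)^{p+2}e^{-(v-L)^2/v}$ is a cleaner rewriting of the paper's exponent, with the same critical point $v^*=\tfrac12\bigl((p+2)+\sqrt{(p+2)^2+4L^2}\bigr)$ and the same maximum $\exp\bigl((p+2)^2/(4L)+o(1)\bigr)$). Your explicit treatment of the $F(-a_\lambda x+b_\lambda)$ piece, tracking the extra factor $C/(\lambda t^2)=L^4/(u^2e^L)$ through the same optimization, is actually a bit more careful than the paper's, which invokes Lemma~\ref{lem:uniformconverge} directly for this term even though that lemma controls $t^{p+2}e^{-C/t}$ rather than $t^{p}e^{-C/t}$; both arguments nonetheless reach the required uniform bound.
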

\begin{proof}
    We note this is trivial when $1-S(t) \sim At^p$ since under this case $b_\lambda = 0$, which implies $S_I(-a_\lambda x + b_\lambda) = 1$, making the integral $0$ regardless of the choice of $K$ or $\lambda$. As such, we only need to consider the case where $1-S(t) \sim At^pe^{-C/t}$.

    Noting that
    \begin{align*}
        1-S(t)e^{-\lambda \int_0^{t}1-S(s)ds} &= 1-e^{-\lambda \int_0^{t}1-S(s)ds} + (1-S(t))e^{-\lambda \int_0^{t}1-S(s)ds} \\
        &\leq  1-e^{-\lambda \int_0^{t}1-S(s)ds} + (1-S(t)),
    \end{align*}
    and that the integral in \eqref{eq:lemmaint} is nonzero when $x<b_\lambda/a_\lambda$, we can split the problem into showing the following two limits,
    \begin{align}
        \lim_{K\rightarrow \infty}\sup_{\lambda>\lambda^*}\int_K^{b_\lambda/a_\lambda} mx^{m-1} (1-S(-a_{\lambda} x + b_{\lambda})) dx
        &=0
        \label{eqn:lemmahelp1},\\
        \lim_{K\rightarrow \infty}\sup_{\lambda>\lambda^*}\int_K^{b_\lambda/a_\lambda} mx^{m-1} (1-e^{-\lambda \int_0^{-a_{\lambda} x + b_{\lambda}}1-S(s)ds}) dx
        &=0.
        \label{eqn:lemmahelp2}
    \end{align}
    We first show \eqref{eqn:lemmahelp1}. For any ${\eps}$ we can find a $\lambda^*$ such that
    $$1-S(-a_{\lambda} x + b_{\lambda}) \leq (1+{\eps})A(b_\lambda-a_\lambda x)^pe^{-C/(b_\lambda-a_\lambda x)}\quad\text{if }\lambda>\lambda^*.$$
    By Lemma \ref{lem:uniformconverge}, we can then find the upper bound
    $$1-S(-a_{\lambda} x + b_{\lambda}) \leq (1+{\eps})^2\frac{C}{\lambda}e^{-x} \leq (1+{\eps})^2Ce^{-x}$$
    when $\lambda^*>1$. Substituting back into \eqref{eqn:lemmahelp1}, we find
    \begin{align*}
        &\lim_{K\rightarrow \infty}\sup_{\lambda>\lambda^*}\int_K^{b_\lambda/a_\lambda} mx^{m-1} (1-S(-a_{\lambda} x + b_{\lambda})) dx\\
        &\quad\leq \lim_{K\rightarrow \infty}\sup_{\lambda>\lambda^*}\int_K^{b_\lambda/a_\lambda} mx^{m-1} (1+{\eps})^2Ce^{-x} dx \\
        &\quad\leq \lim_{K\rightarrow \infty}\int_K^{\infty} mx^{m-1} (1+{\eps})^2Ce^{-x} dx
        =0.
    \end{align*}

    We now show \eqref{eqn:lemmahelp2}. By the short time behavior of $S$,
    \begin{align*}
        &\lim_{K\rightarrow \infty}\sup_{\lambda>\lambda^*}\int_K^{b_\lambda/a_\lambda} mx^{m-1} (1-e^{-\lambda \int_0^{-a_{\lambda} x + b_{\lambda}}1-S(s)ds}) dx\\
        &\quad\leq \lim_{K\rightarrow \infty}\sup_{\lambda>\lambda^*}\int_K^{b_\lambda/a_\lambda} mx^{m-1} (1-e^{-\lambda(1+{\eps}) \frac{A}{C}(b_\lambda - a_\lambda x)^{p+2}e^{-C/(b_\lambda - a_\lambda x)}}) dx.
    \end{align*}
    It then follows by Lemma \ref{lem:uniformconverge},
    \begin{align*}
        &\lim_{K\rightarrow \infty}\sup_{\lambda>\lambda^*}\int_K^{b_\lambda/a_\lambda} mx^{m-1} (1-e^{-\lambda \int_0^{-a_{\lambda} x + b_{\lambda}}1-S(s)ds}) dx \\
        &\leq \lim_{K\rightarrow \infty}\sup_{\lambda>\lambda^*}\int_K^{\infty} mx^{m-1} (1-e^{-(1+{\eps})^2e^{-x}}) dx\\
        &= \lim_{K\rightarrow \infty}\int_K^{\infty} mx^{m-1} (1-e^{-(1+{\eps})^2e^{-x}}) dx
        =0.
    \end{align*}
\end{proof}

\begin{lemma}
    For any ${\eps}>0$, there exists a $\lambda^*$ such that %for all $\lambda > \lambda^*$ and all $x \leq b_\lambda/a_\lambda$,
    $$
    \lambda \frac{A}{C}(b_\lambda - a_\lambda x)^{p+2}e^{-C/(b_\lambda - a_\lambda x)} \leq (1+{\eps})e^{-x}\quad\text{for all }\lambda>\lambda^*\text{ and }x\le b_\lambda/a_\lambda.
    $$
    \label{lem:uniformconverge}
\end{lemma}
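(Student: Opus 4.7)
The plan is to reduce the inequality to a one-variable optimization and then show that the supremum tends to $1$ as $\lambda\to\infty$. After dividing both sides by $e^{-x}$, the goal becomes $\sup_{x\le b_\lambda/a_\lambda} g_\lambda(x)\le 1+\eps$ for all $\lambda>\lambda^*$, where
$$g_\lambda(x):=\tfrac{\lambda A}{C}(b_\lambda-a_\lambda x)^{p+2}\exp\!\left[-\tfrac{C}{b_\lambda-a_\lambda x}+x\right].$$

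Next I would set $L:=\ln(C\lambda)$ and substitute $s:=C/(b_\lambda-a_\lambda x)$, which bijects the allowed $x$ range with $s\in(0,\infty)$. Using the explicit identities $C/a_\lambda=L^2$ and $b_\lambda/a_\lambda=L+(p+2)\ln L-\ln(AC^p)$, so that $e^{b_\lambda/a_\lambda}=C\lambda L^{p+2}/(AC^p)$, all the awkward constants collapse and the function simplifies to
$$g_\lambda=\left(\tfrac{L}{s}\right)^{p+2}\exp\!\left[2L-s-\tfrac{L^2}{s}\right].$$
The structure is now transparent: by AM--GM, $s+L^2/s\ge 2L$ with equality at $s=L$, so the exponential factor is at most $1$ and only the polynomial prefactor $(L/s)^{p+2}$ can push $g_\lambda$ above $1$. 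The maximum is therefore expected near $s=L$ and close to $1$.

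Differentiating $\ln g_\lambda$ in $s$ yields the critical equation $s^2+(p+2)s-L^2=0$, whose unique positive root is $s^*=\tfrac{1}{2}[-(p+2)+\sqrt{(p+2)^2+4L^2}]$. Since $\ln g_\lambda\to-\infty$ at both endpoints, $s^*$ is the global maximum. Using the critical-point identity $L^2/s^*=s^*+(p+2)$ (read off from the quadratic), the maximum value simplifies to
$$\ln g_\lambda(s^*)=(p+2)\ln(L/s^*)+2(L-s^*)-(p+2).$$
Taylor expanding $s^*=L-(p+2)/2+(p+2)^2/(8L)+O(1/L^3)$ and then $\ln(L/s^*)=(p+2)/(2L)+O(1/L^3)$, I would compute $2(L-s^*)-(p+2)=-(p+2)^2/(4L)+O(1/L^2)$ and $(p+2)\ln(L/s^*)=(p+2)^2/(2L)+O(1/L^2)$, so that $\ln g_\lambda(s^*)=(p+2)^2/(4L)+O(1/L^2)\to 0$. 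Hence $g_\lambda(s^*)\to 1$, and for any $\eps>0$ I may choose $\lambda^*$ so that $g_\lambda(s^*)\le 1+\eps$ for all $\lambda>\lambda^*$.

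The main technical obstacle is that the naive leading behaviors of $(p+2)\ln(L/s^*)$ and $2(L-s^*)-(p+2)$ are each of order $1/L$ and do not vanish individually; one must track the Taylor expansions to one order beyond the naive one and observe that the $O(1/L)$ pieces combine, leaving the benign remainder $(p+2)^2/(4L)$. The critical-point identity $L^2/s^*=s^*+(p+2)$ is the key algebraic simplification that makes these cancellations manifest and avoids expanding $L^2/s^*$ directly.
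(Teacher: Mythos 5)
Your proof is correct and follows essentially the same route as the paper's: both reduce the claim to a one-variable optimization (the paper in the variable $X = x+\ln(AC^p)$, you in $s = C/(b_\lambda - a_\lambda x)$, related by $L^2/s = u + p^*\ln u - X$), locate the unique positive root of the same quadratic, and show the resulting maximum value tends to $1$ as $\lambda\to\infty$. A minor quibble with the closing remark: the two contributions $(p+2)\ln(L/s^*)$ and $2(L-s^*)-(p+2)$ \emph{do} each vanish individually (each is $O(1/L)$), so the second-order Taylor terms are not actually needed to conclude $\ln g_\lambda(s^*)\to 0$.
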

\begin{proof}
The definition of $a_\lambda$ and $b_\lambda$ in Theorem~\ref{thm:distexp} yields
    \begin{align*}
        &\lambda \frac{A}{C}(b_\lambda - a_\lambda x)^{p+2}e^{-C/(b_\lambda - a_\lambda x)}\\
        &\quad= \exp[\ln(C\lambda) + (p+2)\ln(b_{\lambda}-a_{\lambda}x)-C/(b_{\lambda}-a_{\lambda}x) + \ln(AC^{-2})]\\
        &\quad= \exp\left[u +p^*\ln(\frac{u+p^*\ln(u) - X}{u^2})-\frac{u^2}{u+p^*\ln(u) - X}+X-x\right],
    \end{align*}
    with the last equality following from the substitutions
    $$u = \ln(C\lambda), \quad X = x+\ln(AC^p), \quad p^* = p+2.$$
    We now focus on bounding the following part of the exponent,
    \begin{align}\label{eq:partofexponent}
        u +p^*\ln(\frac{u+p^*\ln(u) - X}{u^2})-\frac{u^2}{u+p^*\ln(u) - X}+X.
    \end{align}
    Taking the derivative of \eqref{eq:partofexponent} with respect to $X$, we find that the maximum of \eqref{eq:partofexponent} when $x\leq b_\lambda/a_\lambda$, which is equivalent to $X\leq u+p^*\ln(u)$, occurs at
    $$
    X = u+p^*\ln(u)-\frac{1}{2}\left(p^*+\sqrt{(p^*)^2+4u^2}\right).
    $$
    Plugging $X$ back in, the maximum of \eqref{eq:partofexponent} is
    \begin{align}\label{eq:max}
        2u-\sqrt{(p^*)^2+4u^2}+p^*\ln\left(\frac{p^*+\sqrt{(p^*)^2+4u^2}}{2u}\right).
    \end{align}
    Since the limit of \eqref{eq:max} as $u=\ln(C\lambda)\rightarrow \infty$ is $0$, there exists a sufficiently large $u=\ln(C\lambda)$ such that \eqref{eq:max} is always less than $\ln(1+{\eps})$. This implies that for sufficiently large $\lambda$,
    \begin{align*}
        &\lambda \frac{A}{C}(b_\lambda - a_\lambda x)^{p+2}e^{-C/(b_\lambda - a_\lambda x)}\\
        &\quad\leq \exp\left[u +p^*\ln(\frac{u+p^*\ln(u) - X}{u^2})-\frac{u^2}{u+p^*\ln(u) - X}+X-x\right] \\
        &\quad\leq \exp\left[\ln(1+{\eps})-x\right]
        = (1+{\eps})e^{-x}.
    \end{align*}
\end{proof}

\subsection{Proofs for Section \ref{sec:kth}}
We start with a proof of Lemma \ref{lem:Tksurvival}.
\begin{proof}[Proof of Lemma~\ref{lem:Tksurvival}]
    Because 
    \begin{align}\label{eq:convenient}
        P(T_k > t) = P(T_1>t) + \sum_{j=1}^{k-1} P(T_{j+1} > t > T_{j}),% =S_I(t)+ \sum_{j=1}^{k-1} P(T_{j+1} > t > T_{j}),
    \end{align}
    all we need to finish the proof is find $P(T_{j+1} > t > T_{j})$, i.e.\ the probability exactly $j$ searchers have found the target by time $t$. To find this probability, we start by conditioning on the number of particles at time $t$ (denoted $N(t)$), which allows us to draw all but the original searcher's immigration times independently from the uniform distribution. Therefore, letting $F(t)=1-S(t)$ yields
    \begin{align*}
        P(T_{j+1} > t>T_j | N(t) = n) &= F(t)\left[\frac{1}{t} \int_0^t F(s)ds\right]^{j-1}\left[\frac{1}{t} \int_0^t S(s)ds\right]^{n-j}\binom{n-1}{j-1}\\
        &\quad+S(t)\left[\frac{1}{t} \int_0^t F(s)ds\right]^{j}\left[\frac{1}{t} \int_0^t S(s)ds\right]^{n-j-1}\binom{n-1}{j},
    \end{align*}
    where the first line indicates the probability that the original searcher (i.e.\ the searcher present at time 0) and exactly $j-1$ additional searchers find the target by time $t$, and the second line indicates the probability that exactly $j$ searchers, none of whom are the original searcher, have found the target by time $t$.
    
    Using the law of total probability and that the number of searchers added in by time $t$ follows a Poisson distribution with mean $\lambda t$,
    $$
    P(T_{j+1} > t>T_j) = \sum_{n=j}^\infty P(T_{j+1} > t>T_j | N(t) = n) \frac{e^{-\lambda t}(\lambda t)^{n-1}}{(n-1)!}.
    $$
    Combining our expressions, we find that
    \begin{align}\label{eq:lemmauseful}
        P(T_{j+1} > t>T_j) = \exp\left[-\lambda \int_0^t F(s)ds\right]\left[F(t)\frac{\left(\lambda \int_0^t F(s)ds\right)^{j-1}}{(j-1)!} + S(t)\frac{\left(\lambda \int_0^t F(s)ds\right)^{j}}{j!}\right].
    \end{align}
    We are now ready to use our equation for $P(T_k>t)$. Substituting $S_I(t)$ from \eqref{eqn:SIequality} and taking advantage of the telescoping nature of the sum, it follows that
    $$P(T_k > t) = \exp\left[-\lambda \int_0^t F(s)ds\right]\left[S(t)\frac{\left(\lambda \int_0^t F(s)ds\right)^{k-1}}{(k-1)!} + \sum_{j=1}^{k-1} \frac{\left(\lambda \int_0^t F(s)ds\right)^{j-1}}{(j-1)!} \right],$$
    which is what we aimed to prove.
\end{proof}

\begin{proof}[Proof of Theorem~\ref{thm:kthdistpower}]
    Setting $t=a_\lambda x$ in \eqref{eq:lemmauseful}, taking $\lambda\to\infty$, and using Lemma~\ref{lem:power} and the fact that $\lim_{\lambda\to\infty}a_\lambda=0$ yields
    \begin{align*}
        P(T_{j+1}>a_\lambda x>T_j)
        =e^{-x^{p+1}}\frac{x^{j(p+1)}}{j!}.
    \end{align*}
    Hence, \eqref{eq:convenient} and Theorem~\ref{thm:distexp} yield
    \begin{align*}
        \lim_{\lambda\to\infty}P(T_k>a_\lambda x+b_\lambda)
        =e^{-x^{p+1}}\sum_{j=0}^{k-1}\frac{x^{j(p+1)}}{j!},
    \end{align*}
    which completes the proof.
\end{proof}

\begin{proof}[Proof of Theorem~\ref{thm:kthdistexp}]
    Setting $t=a_\lambda x+b_\lambda$ in \eqref{eq:lemmauseful}, taking $\lambda\to\infty$, and using Lemma~\ref{lem:exp} and the fact that $\lim_{\lambda\to\infty}a_\lambda=\lim_{\lambda\to\infty}b_\lambda=0$ yields
    \begin{align*}
        P(T_{j+1}>a_\lambda x+b_\lambda>T_j)
        =e^{-e^x}\frac{e^{jx}}{j!}.
    \end{align*}
    Hence, \eqref{eq:convenient} and Theorem~\ref{thm:distexp} yield
    \begin{align}\label{eq:sumexpression}
        \lim_{\lambda\to\infty}P(T_k>a_\lambda x+b_\lambda)
        =e^{-e^x}\sum_{j=0}^{k-1}\frac{e^{jx}}{j!}.
    \end{align}
    Taking the derivative of the righthand side of \eqref{eq:sumexpression} with respect to $x$ and multiplying by $-1$ yields the probability density in Theorem~\ref{thm:kthdistexp}.
\end{proof}

%\section*{Appendix: Proof of Theorem \ref{thm:kthmoment}: Moments for $k$-th fastest}

\begin{proof}[Proof of Theorem~\ref{thm:kthmoment}]
We once again show uniform integrability for $\{(T_k-b_\lambda)/a_\lambda\}_{\lambda > \lambda^*}$ to show that the moments converge to the moments of the limiting distribution. Using for the work for the moments of first fastest (i.e.\ $k=1$), we replace \eqref{eqn:limitingproofint1} and \eqref{eqn:limitingproofint2} with needing to show the following two equalities,
\begin{align}
    \lim_{K\rightarrow \infty}\sup_{\lambda>\lambda^*}\int_K^{\infty} mx^{m-1} P(T_k>a_{\lambda} x + b_{\lambda}) dx =0
    \label{eqn:limitingproofkthint1},\\
    \lim_{K\rightarrow \infty}\sup_{\lambda>\lambda^*}\int_K^{\infty} mx^{m-1} (1-P(T_k>-a_{\lambda} x + b_{\lambda})) dx =0
    \label{eqn:limitingproofkthint2}.
\end{align}
We start with the easier equation, \eqref{eqn:limitingproofkthint2}. Note that since the $k$-th fastest comes after the first fastest, 
$$
P(T_k>-a_{\lambda} x + b_{\lambda}) \geq P(T_1>-a_{\lambda} x + b_{\lambda}) = S_I(-a_{\lambda} x + b_{\lambda}),
$$ 
and therefore Lemma~\ref{lem:limitingproofint2} implies
\begin{align*}
&\lim_{K\rightarrow \infty}\sup_{\lambda>\lambda^*}\int_K^{\infty} mx^{m-1} (1-P(T_k>-a_{\lambda} x + b_{\lambda})) dx \\
&\quad\leq \lim_{K\rightarrow \infty}\sup_{\lambda>\lambda^*}\int_K^{\infty} mx^{m-1} (1-S_I(-a_{\lambda} x + b_{\lambda})) dx = 0.
\end{align*}

Now, we examine \eqref{eqn:limitingproofkthint1}. Looking back at Theorem \ref{lem:Tksurvival}, we see that
$$
P(T_k > t) \leq \exp\left[-\lambda \int_0^t F(s)ds\right]\left[ \sum_{j=1}^{k} \frac{\left(\lambda \int_0^t F(s)ds\right)^{j-1}}{(j-1)!} \right]
$$
where $F(t)=1-S(t)$, and therefore it suffices to show that
\begin{equation}
    \lim_{K\rightarrow \infty}\sup_{\lambda>\lambda^*}\int_K^{\infty} mx^{m-1} \exp\left[-\lambda \int_0^{a_{\lambda} x + b_{\lambda}} F(s)ds\right]\left[ \frac{\left(\lambda \int_0^{a_{\lambda} x + b_{\lambda}} F(s)ds\right)^{j}}{j!} \right] dx =0,
\end{equation}
for any integer $j \geq 0$. As the $j=0$ case was done earlier (see proof of Lemma \ref{lem:limitingproofint1}), we focus on $j\geq 1$. For ease of notation, define
$$
G := \lambda \int_0^{a_{\lambda} x + b_{\lambda}} F(s)ds.
$$
Then our integral of interest becomes
\begin{align*}
    \int_K^{\infty} mx^{m-1} \exp\left[-G\right] \frac{G^{j}}{j!}  dx &\leq \int_K^{\infty} mx^{m-1} \exp\left[-G\right] G^{j}  dx \\
    %&= \int_{\max(K, x_1)}^{\infty} mx^{m-1} \exp\left[-G\right] G^{j}  dx + \int_K^{\max(K, x_1)} mx^{m-1} \exp\left[-G\right] G^{j}  dx
    %&=\int_K^{\infty} mx^{m-1} e^{-G +j\ln[G]} dx \\
    % &=\int_K^{\infty} mx^{m-1} e^{-G +j\ln[G]} \mathbbm{1}_{\ln[G] \leq \frac{G}{2j}} (x) dx \\
    % &\phantom{=}+\int_K^{\infty} mx^{m-1} e^{-G +j\ln[G]} \mathbbm{1}_{\ln[G] > \frac{G}{2j}} (x) dx \\
    &=\int_K^{\infty} mx^{m-1} e^{-G}G^j \mathbbm{1}_{\ln[G] \leq \frac{G}{2j}} (x) dx \\
    &\phantom{=}+\int_K^{\infty} mx^{m-1} e^{-G}G^j \mathbbm{1}_{\ln[G] > \frac{G}{2j}} (x) dx.
\end{align*}
%where $x_1$ is the largest $x$ value where $\ln[G] = \frac{G}{2j}$ or, if there is no such $x$, then $x_1=0$. 
We deal with the two integrals separately. For the first, note that
\begin{align*}
    \int_K^{\infty} mx^{m-1} e^{-G}G^j \mathbbm{1}_{\ln[G] \leq \frac{G}{2j}} (x) dx &= \int_K^{\infty} mx^{m-1} e^{-G + j \ln(G)} \mathbbm{1}_{\ln[G] \leq \frac{G}{2j}} (x) dx \\ 
    &\leq \int_K^{\infty} mx^{m-1} e^{-G/2} \mathbbm{1}_{\ln[G] \leq \frac{G}{2j}} (x) dx \\
    &\leq \int_K^{\infty} mx^{m-1} e^{-G/2} dx.
\end{align*}
From here, it is straightforward to repeat the steps of the proof of Lemma \ref{lem:limitingproofint1} to show that 
$$
\lim_{K\rightarrow \infty}\sup_{\lambda>\lambda^*} \int_K^{\infty} mx^{m-1} e^{-G}G^j \mathbbm{1}_{\ln[G] \leq \frac{G}{2j}} (x) dx \leq \lim_{K\rightarrow \infty}\sup_{\lambda>\lambda^*} \int_K^{\infty} mx^{m-1} e^{-G/2} dx = 0.
$$
Now for the second integral, we aim to show
$$
\lim_{K\rightarrow \infty}\sup_{\lambda>\lambda^*} \int_K^{\infty} mx^{m-1} e^{-G}G^j \mathbbm{1}_{\ln[G] > \frac{G}{2j}} (x) dx = 0.
$$
This immediately holds when $j=1$ ($\ln(x)<x/2$ for all reals $x$), so we focus on when $j\geq 2$. Let $G^*$ be the largest value of $G$ where $\ln[G] = \frac{G}{2j}$. Then, examining the $x$ value $x^*$ that gives $G^*$, 
$$G^* = \lambda \int_0^{a_{\lambda} x^* + b_{\lambda}} F(s)ds \geq A_\lambda (x^*-1)\geq A^* (x^*-1),$$
where $A_\lambda = \lambda a_\lambda (1-S(a_\lambda + b_\lambda))=\lambda a_\lambda F(a_\lambda + b_\lambda)$. 
Recalling that $A^*=\inf_{\lambda>\lambda^*}A_\lambda>0$, this implies that
$$
x^* \leq \frac{G^*}{A^*}+1,
$$
and therefore so long as $K > \frac{G^*}{A^*}+1$, the integral is $0$ and we have uniform integrability.

All that remains is to find the moments. In the case where $1-S \sim At^pe^{-C/t}$, we find the moment generating function by definition, % then plugging into Mathematica 
$$
\int_{-\infty}^\infty e^{tx}f_{Z_k}(x) dx = \frac{1}{(k-1)!}\Gamma\left(k+t\right)
$$
In the case $1-S \sim At^p$, we retrace the derivation of the moment generating function of the Weibull. Taking a derivative of $P(Y_k<k)$ to find
$$
f_{Y_k}(x) = \frac{p+1}{(k-1)!}x^{p+(p+1)(k-1)}e^{-x^{p+1}},
$$
we then note the moment generating function is
\begin{align*}
    \int_0^\infty e^{tx}f_{Y_k}(x) dx &=\int_0^\infty e^{tx}\frac{p+1}{(k-1)!}x^{p+(p+1)(k-1)}e^{-x^{p+1}} dx \\
    &=\int_0^\infty \sum_{n=0}^{\infty} \frac{t^n}{n!}x^n\frac{p+1}{(k-1)!}x^{p+(p+1)(k-1)}e^{-x^{p+1}} dx \\
    &= \sum_{n=0}^{\infty} \frac{t^n}{n!} \int_0^\infty x^n\frac{p+1}{(k-1)!}x^{p+(p+1)(k-1)}e^{-x^{p+1}} dx \\
    &= \sum_{n=0}^{\infty} \frac{t^n}{n!} \frac{1}{(k-1)!}\Gamma\left(k+\frac{n}{p+1}\right)
\end{align*}
Taking the $m$-th derivative at zero gives the desired result.
\end{proof}

%%%%%%%%%%%%%%%%%%%%%%%%%%%%%%%%%%%%%%%%%%%%%%%%%%%%%%%%%%%%%%%%%%%%%%%%%%%%%%%%%%%%%%%%%%%%%%%%%%%%%%%%%%%%%%%%%%%%%%%%%%%%%%%%%%%%%%%%%%%%
\subsubsection*{Acknowledgments}
The authors were supported by the National Science Foundation (Grant Nos.\ CAREER DMS-1944574 and DMS-2325258).

%%%%%%%%%%%%%%%%%%%%%%%%%%%%%%%%%%%%%%%%%%%%%%%%%%%%%%%%%%%%%%%%%%%%%%%%%%%%%%%%%%%%%%%%%%%%%%%%%%%%%%%%%%%%%%%%%%%%%%%%%%%%%%%%%%%%%%%%%%%%

% Create the reference section using BibTeX:
\bibliography{library.bib}

\begin{thebibliography}{10}

\bibitem{redner2001}
Sidney Redner.
\newblock {\em A guide to first-passage processes}.
\newblock Cambridge University Press, 2001.

\bibitem{shoup82}
David Shoup and Attila Szabo.
\newblock Role of diffusion in ligand binding to macromolecules and cell-bound
  receptors.
\newblock {\em Biophys J}, 40(1):33, 1982.

\bibitem{meerson2015}
B~Meerson and S~Redner.
\newblock Mortality, redundancy, and diversity in stochastic search.
\newblock {\em Phys Rev Lett}, 114(19):198101, 2015.

\bibitem{kurella2015}
V~Kurella, J~C Tzou, D~Coombs, and M~J Ward.
\newblock Asymptotic analysis of first passage time problems inspired by
  ecology.
\newblock {\em Bull Math Biol}, 77(1):83--125, 2015.

\bibitem{armitage1954age}
Peter Armitage and Richard Doll.
\newblock The age distribution of cancer and a multi-stage theory of
  carcinogenesis.
\newblock {\em British journal of cancer}, 8:1--12, 1954.

\bibitem{nordling1953new}
Carl~O Nordling.
\newblock A new theory on the cancer-inducing mechanism.
\newblock {\em British journal of cancer}, 7(1):68, 1953.

\bibitem{durrett2015branching}
Richard Durrett and Richard Durrett.
\newblock {\em Branching process models of cancer}.
\newblock Springer, 2015.

\bibitem{lawley2024competition}
Sean~D Lawley.
\newblock Competition of many searchers.
\newblock In {\em Target Search Problems}, pages 281--303. Springer, 2024.

\bibitem{weiss1983}
G~H Weiss, K~E Shuler, and K~Lindenberg.
\newblock Order statistics for first passage times in diffusion processes.
\newblock {\em J Stat Phys}, 31(2):255--278, 1983.

\bibitem{yuste1996}
S~B Yuste and K~Lindenberg.
\newblock Order statistics for first passage times in one-dimensional diffusion
  processes.
\newblock {\em J Stat Phys}, 85(3-4):501--512, 1996.

\bibitem{yuste1997escape}
S~Bravo Yuste.
\newblock Escape times of j random walkers from a fractal labyrinth.
\newblock {\em Physical review letters}, 79(19):3565, 1997.

\bibitem{yuste2000}
SB~Yuste and L~Acedo.
\newblock Diffusion of a set of random walkers in euclidean media. first
  passage times.
\newblock {\em J Phys A}, 33(3):507, 2000.

\bibitem{yuste2001}
S~B Yuste, L~Acedo, and K~Lindenberg.
\newblock Order statistics for $d$-dimensional diffusion processes.
\newblock {\em Phys Rev E}, 64(5):052102, 2001.

\bibitem{redner2014}
S~Redner and B~Meerson.
\newblock First invader dynamics in diffusion-controlled absorption.
\newblock {\em J Stat Mech}, 2014(6):P06019, 2014.

\bibitem{schuss2019}
Z.~Schuss, K.~Basnayake, and D.~Holcman.
\newblock Redundancy principle and the role of extreme statistics in molecular
  and cellular biology.
\newblock {\em Physics of Life Reviews}, January 2019.

\bibitem{lawley2020dist}
S~D Lawley.
\newblock Distribution of extreme first passage times of diffusion.
\newblock {\em Journal of Mathematical Biology}, 2020.

\bibitem{morgan2023modulation}
Jonathan Morgan and Alan~E Lindsay.
\newblock Modulation of antigen discrimination by duration of immune contacts
  in a kinetic proofreading model of t cell activation with extreme statistics.
\newblock {\em PLOS Computational Biology}, 19(8):e1011216, 2023.

\bibitem{bernoff2023single}
Andrew~J Bernoff, Alexandra Jilkine, Adri{\'a}n~Navarro Hern{\'a}ndez, and
  Alan~E Lindsay.
\newblock Single-cell directional sensing from just a few receptor binding
  events.
\newblock {\em Biophysical Journal}, 122(15):3108--3116, 2023.

\bibitem{maclaurin2025extreme}
James MacLaurin and Jay Newby.
\newblock Extreme first passage times for populations of identical rare events.
\newblock {\em SIAM Journal on Applied Mathematics}, 85(1):109--142, 2025.

\bibitem{fisher1928}
RA~Fisher and LHC Tippett.
\newblock Limiting forms of the frequency distribution of the largest or
  smallest member of a sample.
\newblock In {\em Mathematical Proceedings of the Cambridge Philosophical
  Society}, volume~24, pages 180--190. Cambridge University Press, 1928.

\bibitem{colesbook}
S~Coles.
\newblock {\em An introduction to statistical modeling of extreme values},
  volume 208.
\newblock Springer, 2001.

\bibitem{haanbook}
L~De~Haan and A~Ferreira.
\newblock {\em Extreme value theory: an introduction}.
\newblock Springer Science \& Business Media, 2007.

\bibitem{majumdar2020}
Satya~N Majumdar, Arnab Pal, and Gr{\'e}gory Schehr.
\newblock Extreme value statistics of correlated random variables: a
  pedagogical review.
\newblock {\em Physics Reports}, 840:1--32, 2020.

\bibitem{campos2024}
Daniel Campos and Vicen\ifmmode \mbox{\c{c}}\else~\c{c}\fi{} M\'endez.
\newblock Dynamic redundancy as a mechanism to optimize collective random
  searches.
\newblock {\em Phys. Rev. E}, 109:064109, Jun 2024.

\bibitem{lawley2020networks}
Sean~D Lawley.
\newblock Extreme first-passage times for random walks on networks.
\newblock {\em Physical Review E}, 102(6):062118, 2020.

\bibitem{palyulin2014}
Vladimir~V Palyulin, Aleksei~V Chechkin, and Ralf Metzler.
\newblock L{\'e}vy flights do not always optimize random blind search for
  sparse targets.
\newblock {\em Proceedings of the National Academy of Sciences},
  111(8):2931--2936, 2014.

\bibitem{palyulin2016}
Vladimir~V Palyulin, Aleksei~V Chechkin, Rainer Klages, and Ralf Metzler.
\newblock Search reliability and search efficiency of combined
  l{\'e}vy--brownian motion: long relocations mingled with thorough local
  exploration.
\newblock {\em Journal of Physics A: Mathematical and Theoretical},
  49(39):394002, 2016.

\bibitem{tzou2023}
JC~Tzou and Leo Tzou.
\newblock {Counterexample to the L{\'e}vy flight foraging hypothesis in the
  narrow capture framework}.
\newblock {\em arXiv preprint arXiv:2302.13976}, 2023.

\bibitem{gomez2024first}
Daniel Gomez and Sean~D Lawley.
\newblock First hitting time of a one-dimensional l{\'e}vy flight to small
  targets.
\newblock {\em SIAM Journal on Applied Mathematics}, 84(3):1140--1162, 2024.

\bibitem{metzler2004}
Ralf Metzler and Joseph Klafter.
\newblock The restaurant at the end of the random walk: recent developments in
  the description of anomalous transport by fractional dynamics.
\newblock {\em Journal of Physics A: Mathematical and General}, 37(31):R161,
  2004.

\bibitem{lawley2023super}
Sean~D Lawley.
\newblock {Extreme statistics of superdiffusive L{\'e}vy flights and every
  other L{\'e}vy subordinate Brownian motion}.
\newblock {\em Journal of Nonlinear Science}, 33(4):53, 2023.

\bibitem{madrid2020comp}
Jacob~B Madrid and Sean~D Lawley.
\newblock Competition between slow and fast regimes for extreme first passage
  times of diffusion.
\newblock {\em Journal of Physics A: Mathematical and Theoretical},
  53(33):335002, 2020.

\bibitem{lawley2020uni}
S~D Lawley.
\newblock Universal formula for extreme first passage statistics of diffusion.
\newblock {\em Phys Rev E}, 101(1):012413, 2020.

\bibitem{bressloffbook}
P.~C. Bressloff.
\newblock {\em Stochastic {Processes} in {Cell} {Biology}}.
\newblock Springer International Publishing, 2014.

\bibitem{corless1996}
RM~Corless, GH~Gonnet, DEG Hare, DJ~Jeffrey, and DE~Knuth.
\newblock On the {LambertW} function.
\newblock {\em Advances in Computational mathematics}, 5(1):329--359, 1996.

\bibitem{carslaw1959}
Horatio~Scott Carslaw and John~Conrad Jaeger.
\newblock {\em Conduction of heat in solids}.
\newblock Oxford: Clarendon Press, 2 edition, 1959.

\bibitem{norris1998}
J.R. Norris.
\newblock {\em {Markov Chains}}.
\newblock Statistical {\&} Probabilistic Mathematics. Cambridge University
  Press, 1998.

\bibitem{billingsley2013}
P~Billingsley.
\newblock {\em Convergence of probability measures}.
\newblock John Wiley \& Sons, 2013.

\end{thebibliography}
\bibliographystyle{unsrt}

%%%%%%%%%%%%%%%%%%%%%%%%%%%%%%%%%%%%%%%%%%%%%%%%%%%%%%%%%%%%%%%%%%%%%%%%%%%%%%%%%%%%%%%%%%%%%%%%%%%%%%%%%%%%%%%%%%%%%%%%%%%%%%%%%%%%%%%%%%%%%%%%%%%%%%%%%%%%%%%%%%%%%%%%%%%%%%%%%%%%%%%%%%%%%%%%%%%%%%%%%%%%%%%%%%%%%%%%%%%%%%%%
\end{document}